\documentclass[envcountsame,
envcountsect
]{llncs}

\usepackage{amsmath,amssymb}
\usepackage{latexsym}
\usepackage{amsfonts} 
\usepackage{pifont}
\usepackage{hyperref}
\usepackage{rrpo}

\pagestyle{plain}

\title{Predicative Lexicographic Path Orders
\thanks{This is the full version of the extended abstract 
\cite{Eguchi13} that appeared in the proceedings of the 13th International Workshop on
Termination (WST 2013).
This work is supported by Grant-in-Aid for JSPS Fellows (Grant No.
$25 \cdot 726$) that is granted at Graduate School of Science, Chiba University, Japan.}%
}

\subtitle{An Application of Term Rewriting to the Region of Primitive
Recursive Functions}

\author{Naohi Eguchi%
}
\institute{Institute of Computer Science, University of Innsbruck%
\\
Technikerstrasse 21a, 6020 Innsbruck, Austria 
\\
\email{naohi.eguchi@uibk.ac.at}
}

\begin{document}

\maketitle

\begin{abstract}
In this paper we present a novel termination order the {\em predicative lexicographic path order} (PLPO for short), a syntactic restriction of
the lexicographic path order.
As well as lexicographic path orders, 
several non-trivial primitive recursive equations, e.g., primitive
 recursion with parameter substitution,
unnested multiple recursion,
 or simple nested recursion, can be oriented with PLPOs.
It can be shown that the PLPO however only induces primitive recursive upper bounds on derivation
 lengths of compatible rewrite systems.
This yields an alternative proof of a classical fact that the class of
 primitive recursive functions is closed under those non-trivial
 primitive recursive equations. 
\end{abstract}

\section{Introduction}

As observed by E. A. Cichon and A. Weiermann 
\cite{cich_weier}, in order to assess the {\em time
resources} required to compute a function,
one can discuss lengths of rewriting sequences, which is
known as {\em derivation lengths}, in a term
rewrite system defining the function.
More precisely, if the maximal derivation length of a given rewrite system $\mathcal R$
is bounded by a function in a class $\mathcal F$, then the function
defined by $\mathcal R$ is elementary recursive in $\mathcal F$.
In \cite{AM10}, M. Avanzini and G. Moser have sharpened this connection showing that ``elementary
recursive in''
can be replaced with ``polynomial time in'' if one only considers innermost
rewriting sequences starting with terms whose arguments are already
normalised.
Based on the clear connection between time complexity of functions and
derivation lengths of term rewrite systems,
{\em complexity analysis by term rewriting} has been developed, e.g. 
\cite{AM08,AEM11,AEM12}, also providing machine-independent logical
characterisations of complexity classes related to polynomial-time
computable functions.

In most known cases, those term-rewriting characterisations are more flexible than
purely recursion-theoretic characterisations, and thus non-trivial
closure conditions might be deduced.
In this paper we present an application of rewriting techniques to
some closure conditions for primitive recursive functions.
It is known that the class of primitive recursive
functions is closed under a recursion schema that is not an instance of
primitive recursion, e.g., primitive recursion with parameter
substitution (\textbf{PRP}), unnested multiple
recursion (\textbf{UMR}), or simple nested recursion (\textbf{SNR}):

$\begin{array}{lrcl}
 (\mathbf{PRP}) & 
 f(x+1, y) & = & h(x, y, f(x, p(x, y))) \\
 (\mathbf{UMR}) &
 f(x+1, y+1) & = &
 h(x, y, f(x, p(x, y)), f(x+1, y)) \\
 (\mathbf{SNR}) &
 f(x+1, y) & = &
 h(x, y, f(x, p(x, y, f(x, y))))
 \end{array}
$

\noindent
In the equation of (\textbf{PRP}),
in contrast to a standard equation of primitive recursion
$f(x+1, y) = h(x, y, f(x, y))$,
the second argument of $f(x, y)$ can be parameterised with another
function $p$, in (\textbf{UMR}) recursive calls on multiple arguments
are allowed but nested forms of recursion are forbidden, and in
(\textbf{SNR}) nested recursion is allowed but substitution of recursion
terms for recursion arguments is forbidden in contrast to the
general form of nested recursion. 
Note that any of (\textbf{PRP}), (\textbf{UMR}) and (\textbf{SNR}) is an
instance of (nested) multiple recursion.
The proofs of these facts are traced back to R. P\'eter's work \cite{peter},
where for each of (\textbf{PRP}), (\textbf{UMR}) and (\textbf{SNR}), a
tricky recursion-theoretic reduction to the standard primitive recursion
was achieved.

H. Simmons \cite{Simmons88} provided uniform proofs of P\'eter's
results in a higher order setting.
In \cite{cich_weier}, alternative proofs of P\'eter's results were given
employing primitive recursive number-theoretic interpretations of rewrite systems
corresponding to the non-trivial primitive recursive equations mentioned above.
On the other side, in order to look into the distinction between
primitive recursive and P\'eter's non-primitive recursive multiply
recursive functions,
it is of interest to discuss (variants of) a termination order
known as the lexicographic path order (LPO
for short).
As shown by Weiermann \cite{weier95}, the LPO induces multiply recursive upper
bounds on derivation lengths of compatible rewrite systems.
Note that any equation of (\textbf{PRP}), (\textbf{UMR}) and
(\textbf{SNR}) can be oriented with an LPO.
Hence it is natural to restrict the LPO to capture these non-trivial
primitive recursive equations.

Stemming from Simmons' approach in \cite{Simmons88} but without
higher-order notions. we introduce the
{\em predicative lexicographic path order} (PLPO for short),
a syntactic restriction of the LPO.
As well as LPOs, 
(\textbf{PRP}) (\textbf{UMR}) and (\textbf{SNR}) can be oriented with PLPOs.
However, in contrast to the LPO, it can be shown that the PLPO only induces primitive recursive upper bounds
on derivation lengths of compatible rewrite systems (Corollary \ref{c:bound}).
This yields an alternative proof of the fact that the class of primitive
recursive functions is closed under (\textbf{PRP}) (\textbf{UMR}) and
(\textbf{SNR}) (Corollary \ref{c:main}).

\subsection{Related work}
\label{subs:related}

The recursion-theoretic characterisation of primitive recursive
functions given in \cite{Simmons88} is based on
a restrictive (higher order primitive) recursion that is commonly known as {\em
predicative recursion} \cite{bel_cook} or {\em ramified recursion} \cite{leivant}.
Predicative recursion is a syntactic restriction of the standard
(primitive) recursion based on a separation of argument positions into
two kinds, where the number of recursive calls is measured
only by an  argument occurring left to semicolon whereas results of recursion are allowed to
be substituted only for arguments occurring right:
\begin{equation}
\tag{\textbf{Predicative Recursion}}
\label{e:sr}
\begin{array}{rcl}
f(0, \vec y; \vec z) &=& g(\vec y; \vec z) \\
f(x+1, \vec y; \vec z) &=& h(x, \vec y; \vec z, f(x, \vec y; \vec z))
\end{array}
\end{equation}

The {\em polynomial path order} (POP* for short)
\cite{AM08} is defined to be compatible with
predicative recursion:
$f(s(x), \vec y; \vec z) >_{\pop}
 h(x, \vec y; \vec z, f(x, \vec y; \vec z))$.
It is worth noting that predicative recursion does not make sense with the usual
composition since argument positions can be shifted from left to right.
For example, given a function $f(x; y)$, the function $f' (; x, y)$ 
such that $f(x; y) = f' (; x, y)$ could be defined as
$f' (; x, y) = f ( I^2_1 (; x, y); I^2_2 (; x,  y))$,
and thus the intended argument separation would break.
To maintain the constraint on the argument separation, composition is limited to a restrictive form called {\em predicative composition}:
\begin{equation}
\tag{\textbf{Predicative Composition}}
\label{e:pcom}
f(\vec x; \vec y) = h(\vec g (\vec x;); \vec g' (\vec x; \vec y))
\end{equation}
An auxiliary suborder $\sqsupset_{\pop}$ of the POP* $>_{\pop}$ is defined so that
$f(\vec x; y) \sqsupset_{\pop} g_j (\vec x; )$
holds for each $g_j \in \vec g$.
The POP* induces the polynomial (innermost) {\em runtime complexity} of
compatible rewrite systems.
Namely, for any rewrite system $\mathcal R$ compatible with an instance of POP*,
there exists a polynomial such that the length of any innermost
$\mathcal R$-rewriting sequence starting with a term whose arguments are
normalised can be bounded by the polynomial in the size of the starting term.
Moreover, predicative recursion can be extended to (nested) multiple
recursion, e.g.,
\begin{equation}
\label{e:snr}
\begin{array}{rcl}
f(0, \vec y; \vec z) &=& g(\vec y; \vec z) \\
f(x+1, \vec y; \vec z) &=& 
h \left( x, \vec y; \vec z, 
  f \left( x, \vec p (x, \vec y;); 
    \vec h' (x, \vec y; \vec z, f(x, \vec p' (x, \vec y;); \vec z))
    \right)
\right).
\end{array}
\end{equation}

Essentially, the {\em exponential path order} (EPO* for short)
\cite{AEM11} is defined to be compatible with predicative multiple recursion.
The EPO* induces the exponential innermost runtime complexity of
compatible rewrite systems.
Not surprisingly, the EPO* is too weak to orient the
general form of primitive recursion.
In \cite{Simmons88} the meaning of predicative recursion is relaxed 
(though \cite{Simmons88} is an earlier work than \cite{bel_cook,leivant})
in such a way that recursive calls are still restrictive for (nested) multiple
recursion as in the equation (\ref{e:snr}) but allowed even on
arguments occurring right to semicolon for
the standard primitive recursion, i.e.,
\begin{equation}
\label{e:prec}
\begin{array}{rcl}
f(; 0, \vec y) &=& g(; \vec y) \\
f(; x+1, \vec y) &=& 
h(; x, \vec y, f(; x, \vec y)).
\end{array}
\end{equation}
Intuitively, every primitive recursive function can
be used as an initial function in the underlying function algebra.

\subsection{Outline}
\label{subs:outline}

In Section \ref{s:PLPO} we start with defining an auxiliary suborder
$\sqsupset_{\rrpo}$ of the PLPO $>_{\rrpo}$ (Definition \ref{d:ssup}), which is exactly the same as
$\sqsupset_{\pop}$.
The definition of $>_{\rrpo}$ contains three important cases:
(i) Case \ref{d:rrpo:3} of Definition \ref{d:rrpo} makes it possible to
orient the equation of (\ref{e:pcom});
(ii) Case \ref{d:rrpo:4} makes the orientation
$f(; s(x), \vec y) >_{\rrpo} f(; x, \vec y)$ possible.
This together with Case \ref{d:rrpo:3} makes it possible to
orient the equation (\ref{e:prec}) of primitive recursion;
(iii) Case \ref{d:rrpo:5} makes the orientation
$f(s(x), \vec y; \vec z) >_{\rrpo}
 f \left( x, \vec p (x, \vec y;); 
    \vec h' (x, \vec y; \vec z, f(x, \vec p' (x, \vec y;); \vec z))
   \right)
$
possible as well as
$f(s(x), \vec y; \vec z) >_{\rrpo}
 f \left( x, \vec p' (x, \vec y;); \vec z) \right)
$.
This together with Case \ref{d:rrpo:3} makes it possible to
orient the restrictive form (\ref{e:snr}) of nested recursion.
Without Case \ref{d:rrpo:4}, the PLPO only induces elementary recursive
derivation lengths.

In Section \ref{s:int} we present a primitive recursive interpretation for
the PLPO.
This yields that the maximal derivation length of a
rewrite system compatible with a PLPO
is bounded by a primitive recursive
function in the size of a starting term.

In Section \ref{s:application} we show that the complexity result about
the PLPO obtained in Section \ref{s:int} can be used to show
that the class of primitive recursive functions is closed under (\textbf{PRP}), (\textbf{UMR}) and (\textbf{SNR}).

In Section \ref{s:comparison} we compare the PLPO with related
termination orders to make the contribution of this work clearer.

\section{Predicative lexicographic path orders}
\label{s:PLPO}

Let $\mathcal V$ denote a countably infinite set of variables. 
A {\em signature} $\mathcal F$ is a finite set of function symbols.
The number of argument positions of a function symbol $f \in \mathcal F$
is denoted as $\ar (f)$.
We write $\mathcal{T(V,F)}$ to denote the set of terms over 
$\mathcal V$ and $\mathcal F$ whereas write $\mathcal{T(F)}$ to denote 
the set of closed terms over $\mathcal F$, or the set of {\em ground}
terms in other words.
The signature $\mathcal F$ can be partitioned into the set
$\mathcal C$ of {\em constructors} and the set 
$\mathcal D$ of {\em defined} symbols.
We suppose that $\mathcal C$ contains at least one constant.
The set $\mathcal D$ of defined symbols includes a (possibly empty)
specific subset $\mathcal D_{\lex}$, where a term will be compared
lexicographically if
its root symbol belongs to $\mathcal D_{\lex}$.
A {\em precedence} $\geqslant_{\mathcal F}$ on the signature 
$\mathcal F$ is a quasi-order on $\mathcal F$ whose strict part $>_{\mathcal F}$  
is well-founded.
We write $f \approx_{\mathcal F} g$ if 
$f \geqslant_{\mathcal F} g$ and $g \geqslant_{\mathcal F} f$
hold.

In accordance with the argument separation for predicative recursion, we assume that the argument positions of every function symbol
are separated into two kinds.
As in the schema (\ref{e:sr}), the separation is denoted by semicolon as
$f(t_1, \dots, t_k; t_{k+1}, \dots, t_{k+l})$,
where $t_1, \dots, t_k$ are called {\em normal} arguments whereas
$t_{k+1}, \dots, t_{k+l}$ are called {\em safe} ones.
The equivalence $\approx_{\mathcal F}$ is extended to the term
equivalence $\approx$.
We write 
$f(s_1, \dots, s_k; s_{k+1}, \dots, s_{k+l}) \approx 
 g(t_1, \dots, t_k; s_{k+1}, \dots, t_{k+l})$
if $f \approx_{\mathcal F} g$ and 
$s_j \approx t_j$ for all $j \in \{ 1, \dots, k+l \}$.

\begin{definition}
\label{d:ssup}
An auxiliary relation
$s = f(s_1, \dots, s_k; s_{k+1}, \dots, s_{k+l}) \ssup t$
holds if one of the following cases holds, where
$s \ssupeq t$ denotes $s \ssup t$ or $s \approx t$.
\begin{enumerate}
\item $f \in \mathcal C$ and $s_i \ssupeq t$ for some
      $i \in \{ 1, \dots, k+l \}$.
\label{d:ssup:1}
\item $f \in \mathcal D$ and $s_i \ssupeq t$ for some
      $i \in \{ 1, \dots, k \}$.
\label{d:ssup:2}
\item $f \in \mathcal D$ and 
      $t = g(t_1, \dots, t_m; t_{m+1}, \dots, t_{m+n})$ for some $g$
      such that $f >_{\mathcal F} g$, and
      $s \ssup t_j$ for all $j \in \{ 1, \dots, m+n \}$.
\label{d:ssup:3}
\end{enumerate}
\end{definition}

Now we define the {\em predicative lexicographic path order} (PLPO for short)
denoted as $>_{\rrpo}$.
We write $s \geqslant_{\rrpo} t$ if
$s >_{\rrpo} t$ or $s \approx t$ holds, like the relation $\ssupeq$, write
$(s_1, \dots, s_k) \geqslant_{\rrpo} (t_1, \dots, t_k)$ if
$s_j \geqslant_{\rrpo} t_j$ for all $j \in \{ 1, \dots, k \}$, and we write
$(s_1, \dots, s_k) >_{\rrpo} (t_1, \dots, t_k)$ if
$(s_1, \dots, s_k) \geqslant_{\rrpo} (t_1, \dots, t_k)$ and additionally
$s_i >_{\rrpo} t_i$ holds for some $i \in \{ 1, \dots, k \}$.

\begin{definition}
\label{d:rrpo}
The relation
$s = f(s_1, \dots, s_k; s_{k+1}, \dots, s_{k+l}) >_{\rrpo} t$
holds if one of the following cases holds.
\begin{enumerate}
\item $s \ssup t$.
\label{d:rrpo:1}
\item $s_i \geqslant_{\rrpo} t$ for some $i \in \{ 1, \dots, k+l \}$.
\label{d:rrpo:2}
\item $f \in \mathcal D$ and 
      $t = g(t_1, \dots, t_m; t_{m+1}, \dots, t_{m+n})$ for some
      $g$ such that $f >_{\mathcal F} g$, 
      $s \ssup t_j$ for all $j \in \{ 1, \dots, m \}$, and
      $s >_{\rrpo} t_j$ for all $j \in \{ m+1, \dots, m+n \}$.
\label{d:rrpo:3}
\item $f \in \mathcal D \setminus \mathcal D_{\lex}$ and
      $t = g(t_1, \dots, t_k; t_{k+1}, \dots, t_{k+l})$ for some
      $g$ such that $f \approx_{\mathcal F} g$, 
  \begin{itemize}
  \item $(s_{1}, \dots, s_{k}) \geqslant_{\rrpo}
         (t_{1}, \dots, t_{k})$,
        and   
  \item $(s_{k+1}, \dots, s_{k+l}) >_{\rrpo}
         (t_{k+1}, \dots, t_{k+l})$.
  \end{itemize}
\label{d:rrpo:4}
\item $f \in \mathcal D_{\lex}$ and
      $t = g(t_1, \dots, t_m; t_{m+1}, \dots, t_{m+n})$ for some
      $g$ such that $f \approx_{\mathcal F} g$, and there exists 
      $i_0 \in \{ 1, \dots, \min (k, m) \}$ such that
  \begin{itemize}
  \item $s_j \approx t_j$ for all $j \in \{ 1, \dots, i_0 -1 \}$,
  \item $s_{i_0} >_{\rrpo} t_{i_0}$,
  \item $s \ssup t_j$ for all $j \in \{ i_0 +1, \dots, m \}$, and
  \item $s >_{\rrpo} t_j$ for all $j \in \{ m+1, \dots, m+n \}$.
  \end{itemize}
\label{d:rrpo:5}
\end{enumerate}
\end{definition}

Let $>_{\rrpo}$ be the PLPO induced by a precedence 
$\geqslant_{\mathcal F}$.
Then, by induction according to the definition of $>_{\rrpo}$,
it can be shown that $>_{\rrpo} \subseteq >_{\mathsf{lpo}}$ holds
for the lexicographic path order $>_{\mathsf{lpo}}$ induced by the same
precedence $\geqslant_{\mathcal F}$.
The converse inclusion does not hold in general.

\begin{example}
\label{e:1}
      $\mathcal R_{\mathsf{PR}} = 
       \left\{
       \begin{array}{rcl}
          \mf (; 0, y) & \rightarrow & \mg (;y), \\
          \mf (; \ms (;x), y) & \rightarrow & \mh(; x, y, \mf(;x, y))
       \end{array} 
       \right\}$.

The sets $\mathcal C$ and $\mathcal D$ are defined by 
$\mathcal C = \{ 0, \ms \}$ and $\mathcal D = \{ \mg, \mh, \mf \}$.
Let $\mathcal D_{\lex} = \emptyset$.
Define a precedence $\geqslant_{\mathcal F}$ by
$\mf \approx_{\mathcal F} \mf$ and $\mf >_{\mathcal F} \mg, \mh$.
Define an argument separation as indicated in the rules.
Then $\mathcal R_{\mathsf{PR}}$ can be oriented with the PLPO 
$>_{\rrpo}$ induced by $\geqslant_{\mathcal F}$
as follows.
For the first rule 
$\mf(;0, y) >_{\rrpo} y$ and hence
$\mf(; 0, y) >_{\rrpo} \mg(; y)$ by Case \ref{d:rrpo:3} in Definition
      \ref{d:rrpo}.
Consider the second rule.
Since $(\ms(;x), y) >_{\rrpo} (x, y)$,
$\mf(; \ms(;x), y) >_{\rrpo} \mf(; x, y)$ holds as an instance of Case
      \ref{d:rrpo:4}.
An application of Case \ref{d:rrpo:3} allows us to conclude
$\mf(; \ms(; x), y) >_{\rrpo} \mh(; x, y, \mf(; x, y))$.
\end{example}

\begin{example}
\label{e:2}
$\mathcal R_{\mathsf{PRP}} =
       \left\{
       \begin{array}{rcl}
          \mf(0; y) & \rightarrow & \mg(; y), \\ 
          \mf(\ms (; x); y) & \rightarrow &
          \mh(x; y, \mf(x; \mP (x; y)))
       \end{array}
       \right\}$.

The sets $\mathcal C$ and $\mathcal D$ are defined as in the previous example.
Define the set $\mathcal D_{\lex}$ by
$\mathcal D_{\lex} = \{ \mf \}$.
Define a precedence $\geqslant_{\mathcal F}$ by
$\mf \approx_{\mathcal F} \mf$ and
$\mf >_{\mathcal F} \mQ$ for all 
$\mQ \in \{ \mg, \mQ, \mh \}$.
Define an argument separation as indicated.
Then $\mathcal R_{\mathsf{PRP}}$ can be oriented with the induced PLPO
$>_{\rrpo}$.
We only consider the most interesting case.
Namely we orient the second rule.
Since $\ms(; x) \ssup x$, 
$\mf( \ms(; x); y) \ssup x$ holds by the definition of $\ssup$.
This together with Case \ref{d:rrpo:3} yields
$\mf( \ms(; x); y) >_{\rrpo} \mP (x; y)$.
Hence an application of Case \ref{d:rrpo:5} yields
$\mf( \ms(; x); y) >_{\rrpo} \mf(x; \mP (x; y))$.
Another application of Case \ref{d:rrpo:3} allows us to conclude
$\mf(\ms(; x); y) >_{\rrpo} \mh(x; y, \mf(x; \mP (x; y)))$.
\end{example}

\begin{example}
\label{e:3}
$\mathcal R_{\mathsf{UMR}} =
       \left\{
       \begin{array}{rcl}
          \mf(0, y;) & \rightarrow & \mg_0 (y;), \\
          \mf( \ms(; x), 0; ) & \rightarrow & \mg_1 (x; \mf(x, \mQ (x;); )), \\
          \mf( \ms(; x), \ms(; y);) & \rightarrow & 
          \mh(x, y; \mf(x, \mP (x, y;);), \mf( \ms(; x), y;)) 
       \end{array}
       \right\}$.

The sets $\mathcal C$ and $\mathcal D$ are defined
 as in the former two examples and the set $\mathcal D_{\lex}$ is
 defined as in the previous example. 
Define a precedence $\geqslant_{\mathcal F}$ by
$\mf \approx_{\mathcal F} \mf$ and 
$\mf >_{\mathcal F} \mg$ for all 
$\mg \in \{ \mg_0, \mg_1, \mP, \mQ, \mh \}$.
Define an argument separation as indicated.
Then $\mathcal R_{\mathsf{UMR}}$ can be oriented with the induced PLPO
$>_{\rrpo}$.
Let us consider the most interesting case. 
Namely we orient the third rule.
Since $\mf >_{\mathcal F} \mP$ and $\ms(; u) \ssup u$ for each $u \in \{ x, y\}$,
$\mf( \ms(; x), \ms(; y); ) \ssup \mP (x, y; )$ holds by the definition of $\ssup$.
Hence, since $\ms (; x) >_{\rrpo} x$, an application of Case \ref{d:rrpo:5} in
      Definition \ref{d:rrpo} yields
$\mf( \ms(; x), \ms(; x); ) >_{\rrpo} \mf(x, \mP (x, y; ); )$.
Similarly another application of Case \ref{d:rrpo:5} yields
$\mf( \ms(; x), \ms(; y); ) >_{\rrpo} \mf( \ms(; x), y; )$.
Clearly
$\mf( \ms(; x), \ms(; y); ) \ssup u$ for each $u \in \{ x, y \}$.
Hence an application of Case \ref{d:rrpo:3} allows us to conclude
$\mf( \ms(; x), \ms(; y); )$ $>_{\rrpo}$
$\mh(x, y; \mf(x, \mP (x, y;); )$, $\mf( \ms(; x), y; ))$.
\end{example}

\begin{example}
\label{e:4}
$\mathcal R_{\mathsf{SNR}} =
       \left\{
       \begin{array}{rcl}
          \mf(0; y) & \rightarrow & \mg(; y), \\
          \mf( \ms(; x); y) & \rightarrow &
          \mh(x; y, \mf(x; \mP (x; y, \mf(x; y))))
       \end{array}
       \right\}$.

The sets $\mathcal C$ and $\mathcal D$ are defined as in the former
 three examples and the set $\mathcal D_{\lex}$ is defined as in the
former two examples.
Define a precedence $\geqslant_{\mathcal F}$ as in the previous example.
Define an argument separation as indicated.
Then $\mathcal R_{\mathsf{SNR}}$ can be oriented with the induced PLPO
$>_{\rrpo}$.
We only orient the second rule.
As we observed in the previous example,
$\mf( \ms(; x); y) >_{\rrpo} \mf(x; y)$ holds by Case \ref{d:rrpo:5}.
Hence
$\mf( \ms(; x); y) >_{\rrpo} \mP (x; y, \mf(x; y))$ holds
by Case \ref{d:rrpo:3}.
This together with Case \ref{d:rrpo:5} yields
$\mf( \ms(; x); y) >_{\rrpo} \mf(x; \mP (x; y, \mf(x; y)))$.
Thus another application of Case \ref{d:rrpo:3} allows us to conclude
$\mf( \ms(; x); y) >_{\rrpo}
 \mh(x; y, \mf(x; \mP (x; y, \mf(x; y))))$.
\end{example}

Careful readers may observe that the general form of nested recursion,
e.g., the defining equations for the Ackermann function, cannot be oriented
with any PLPO.

\section{Primitive recursive upper bounds for predicative lexicographic
 path orders}
\label{s:int}

In this section we present a primitive recursive interpretation for
the PLPO.
This yields that the maximal derivation length of a
rewrite system compatible with a PLPO
is bounded by a primitive recursive
function in the size of a starting term.

\begin{definition}
Let $\ell$ be a natural such that $2 \leq \ell$.
Then we define a restriction $\ssup^\ell$ of $\ssup$.
The relation
$s = f(s_1, \dots, s_k; s_{k+1}, \dots, s_{k+l}) \ssup^\ell t$
holds if one of the following cases holds, where
$s \ssupeq^\ell t$ denotes $s \ssup^\ell t$ or $s \approx t$.
\begin{enumerate}
\item $f \in \mathcal C$ and $s_i \ssupeq^\ell t$ for some
      $i \in \{ 1, \dots, k+l \}$.
\item $f \in \mathcal D$ and $s_i \ssupeq^\ell t$ for some
      $i \in \{ 1, \dots, k \}$.
\item $f \in \mathcal D$ and 
      $t = g(t_1, \dots, t_m; t_{m+1}, \dots, t_{m+n})$ for some $g$
      such that $f >_{\mathcal F} g$, and
      $s \ssup^{\ell -1} t_j$ for all $j \in \{ 1, \dots, m+n \}$.
\end{enumerate}
\end{definition}

We write $>_{\rrpo}^\ell$ to denote the PLPO induced by $\ssup^{\ell}$.
The {\em size} of a term $t$, which is the number of nodes in the standard tree representation of $t$,
is denoted as $|t|$.
Note that for any two terms $s, t \in \mathcal{T(F,V)}$, if $s \approx t$,
then $|s| = |t|$ holds.
Following \cite[page 214]{cich_weier}, we define a primitive recursive
function $F_m$ ($m \in \mathbb N$).

\begin{definition}
Given a natural $d \geq 2$, the function $F_m$ is defined by 
$F_0 (x) = d^{x+1}$
and
$F_{m+1} (x) = F_m^{d (1+x)} (x)$,
where $F_m^d$ denotes the $d$-fold iteration of $F_m$.
\end{definition}

For basic properties of the function $F_m$, we kindly refer readers to 
\cite[Lemma 5.4, page 216]{cich_weier}.
Note in particular that $F_m$ is strictly increasing and hence
$F_m (x) + y \leq F_m (x+y)$ holds for any $x$ and $y$.

\begin{definition}
Given a natural $k \geq 1$, we inductively define a primitive recursive function 
$F_{m, n}: \mathbb N^k \rightarrow \mathbb N$ by
\begin{eqnarray*}
F_{m, 0} (x_1, \dots, x_{k}) &=& 0, \\
F_{m, n+1} (x_1, \dots, x_{k}) &=& 
  \begin{cases}
  F_m^{F_{m, n} (x_1, \dots, x_{k}) + d(1+ x_{n+1})} 
  \left( \textstyle{\sum}_{j=1}^{n+1} x_j \right)
  & \text{if } n < k, \\
  F_m^{F_{m, n} (x_1, \dots, x_{k}) +d} 
  \left( \textstyle{\sum}_{j=1}^{k} x_j \right)
  & \text{if } k \leq n.
  \end{cases} 
\end{eqnarray*} 
\end{definition} 

\begin{lemma}
\label{l:Fmn}
For any $n \geq 1$, 
$F_{m,n} (x_1, \dots, x_k) \leq 
 F_{m+1}^n \left( \textstyle{\sum}_{j=1}^k x_j \right)$
holds.
\end{lemma}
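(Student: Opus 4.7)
The plan is to induct on $n \geq 1$. Throughout, set $X = \sum_{j=1}^k x_j$ and rely on two standard properties of the functions $F_m$ collected in Cichon--Weiermann: monotonicity of $F_m^a(u)$ in both the iteration count $a$ and the argument $u$, and the majorisation $F_{m+1}^n(X) \geq F_0(X) = d^{X+1} \geq 2X$ valid for all $n \geq 1$ and $d \geq 2$. The base case $n = 1$ is immediate by unfolding: $F_{m,1}(\vec x) = F_m^{0 + d(1+x_1)}(x_1) = F_{m+1}(x_1) \leq F_{m+1}(X)$, using $F_{m,0}(\vec x) = 0$ and monotonicity of $F_{m+1}$.

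For the inductive step I would assume $F_{m,n}(\vec x) \leq F_{m+1}^n(X)$ and split on whether $n < k$ or $k \leq n$. In both subcases the target inequality has the shape $F_m^{F_{m,n}(\vec x) + c}(Y) \leq F_{m+1}^{n+1}(X)$, where $Y \leq X$, and $c = d(1+x_{n+1})$ in the first subcase or $c = d$ in the second. Expanding the right-hand side as $F_{m+1}^{n+1}(X) = F_m^{d(1 + F_{m+1}^n(X))}(F_{m+1}^n(X))$, the induction hypothesis together with $Y \leq X \leq F_{m+1}^n(X)$ lets me increase both the base and the iteration count, so the remaining task is purely arithmetic: to verify the inequality $F_{m+1}^n(X) + c \leq d(1 + F_{m+1}^n(X))$.

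The case $k \leq n$ with $c = d$ reduces to $F_{m+1}^n(X) \leq d \cdot F_{m+1}^n(X)$, which is immediate from $d \geq 2$. The main obstacle is the case $n < k$, where $c = d(1+x_{n+1})$ and the arithmetic inequality simplifies to $(d-1) F_{m+1}^n(X) \geq d\, x_{n+1}$. Since $d/(d-1) \leq 2$ for $d \geq 2$, it suffices to show $F_{m+1}^n(X) \geq 2\, x_{n+1}$, and this is exactly what the majorisation chain $F_{m+1}^n(X) \geq F_0(X) \geq 2X \geq 2\, x_{n+1}$ provides, using that $n+1 \leq k$ so $x_{n+1}$ is one of the summands constituting $X$.
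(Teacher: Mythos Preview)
Your proof is correct and follows essentially the same route as the paper's own argument: induction on $n$, unfolding $F_{m+1}^{n+1}(X) = F_m^{d(1+F_{m+1}^n(X))}(F_{m+1}^n(X))$, and reducing the induction step to the arithmetic inequality $d\,x_{n+1} \leq (d-1)\,F_{m+1}^n(X)$, which is exactly the inequality the paper isolates. Your treatment is slightly more explicit than the paper's in handling the case $k \leq n$ (the paper merely remarks that it ``can be shown in the same way''), and your base case phrases the estimate as $F_{m+1}(x_1) \leq F_{m+1}(X)$ rather than expanding the iterate, but these are purely cosmetic differences.
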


\begin{proof}
By induction on $n \geq 1$.
In the base case, we reason as
$F_{m, 1} (x_1, \dots, x_k) =
 F_m^{d (1 + x_{1})} 
 \left( x_1 \right) \leq
  F_m^{d \left( 1 + \sum_{j=1}^k x_j \right)} 
  \left( \textstyle{\sum}_{j=1}^k x_j \right) =
  F_{m+1} \left( \sum_{j=1}^k x_j \right)$.
For the first induction step, suppose $n < k$. Then
\begin{eqnarray*}
&&
F_{m, n+1} (x_1, \dots, x_k)
\\ &=&
F_{m}^{F_{m, n} (x_1, \dots, x_k)
       + d(1+ x_{n+1})
      }
\left( \textstyle{\sum}_{j=1}^{n+1} x_j \right)
\\ &\leq&
F_{m}^{F_{m+1}^n \left( \sum_{j=1}^k x_j \right)
       + d(1+ x_{n+1})
      }
\left( \textstyle{\sum}_{j=1}^{n+1} x_j \right)
\quad \text{(by induction hypothesis)}
\\ &\leq&
F_{m}^{d \left( 1 + F_{m+1}^n 
                \left( \sum_{j=1}^k x_j
                \right)
         \right)
      }
\left( F_{m+1}^n \left( \textstyle{\sum}_{j=1}^k x_j \right) \right)
=
F_{m+1}^{n+1} \left( \textstyle{\sum}_{j=1}^k x_j \right).
\end{eqnarray*}
The last inequality holds since 
$d x_{n+1} \leq 
 (d-1) \cdot F_{m+1}^n \left( \textstyle{\sum}_{j=1}^k x_j \right)$ 
holds.
The case that $k \leq n$ can be shown in the same way.
\end{proof}

\begin{definition}
Let $2 \leq \ell$, $\mathcal F$ be a signature and let
 $\geqslant_{\mathcal F}$ be a precedence
on $\mathcal F$.
The {\em rank}
$\rk: \mathcal F \rightarrow \mathbb N$ 
of function symbols is defined to be compatible with
$\geqslant_{\mathcal F}$, i.e.,
$\rk (f) \geq \rk (g) \Leftrightarrow f \geqslant_{\mathcal F} g$.
Let
$K :=  \max 
 \left( \{ 2 \} \cup
        \{ k \in \mathbb N \mid f \in \mathcal F \ \text{and} \  f \  
             \text{has} \ k \right.$ 
$\left. \text{normal} \ \text{argument} \
             \text{positions}
        \}
 \right)$. 
Then a (monotone) primitive recursive interpretation 
$\mathcal I: \mathcal{T(F)} \rightarrow \mathbb N$
is defined by
\[
 \mathcal I (t) =
 d^{F_{\rk (f) + \ell, K+1} (\mathcal I (t_1), \dots, \mathcal I (t_k))
   } 
 \cdot \left( \textstyle{\sum}_{j=1}^{l} \mathcal I (t_{k+j}) +1 \right),
\]
where
$t = f(t_1, \dots, t_k; t_{k+1}, \dots, t_{k+l}) \in \mathcal{T(F)}$.
We write
$\mathcal J_n (t)$
to abbreviate
$F_{\rk (f) + \ell, n} (\mathcal I (t_1), \dots, \mathcal I (t_k))$,
i.e.,
$\mathcal I (t) =
 d^{\mathcal J_{K+1} (t)} \cdot 
 \left( \sum_{j=1}^{l} \mathcal I (t_{k+j}) +1 \right)$
holds.
\end{definition}

Let
$t = f(t_1, \dots, t_k; t_{k+1}, \dots, t_{k+l}) \in \mathcal{T(F)}$
and $f \in \mathcal F$ with $k$ normal argument positions.
Since $k \leq K$ holds by the definition of the constant $K$,
$\mathcal J_{K+1} (t) = 
 F_{\rk (f) + \ell
   }^{ \mathcal J_K (t) +d}
 \left( \sum_{j=1}^{k} \mathcal I (t_j) \right)$
holds.
Furthermore,
$F_{\rk (f) + \ell}^{2d}
 \left( \sum_{j=1}^{k} \mathcal I (t_j) \right) \leq
 \mathcal J_K (t)
$
holds since $2 \leq K$.
For any ground terms $s, t \in \mathcal{T(F)}$, it can be shown by
induction on the size $|t|$ of $t$ that if
$s \approx t$, then
$\mathcal I (s) = \mathcal I (t)$ holds.

\begin{theorem}
\label{t:main}
Let $s, t \in \mathcal{T(F,V)}$ and 
$\sigma: \mathcal V \rightarrow \mathcal{T(F)}$ 
be a ground substitution.
Suppose 
$\max \big( \{ \ar (f) +1 \mid f \in \mathcal F \} \cup 
            \{ \ell \cdot (K+2) +2 \} \cup \{ |t|+1 \}
      \big) \leq d$.
If 
$s >_{\rrpo}^\ell t$, then,
for the interpretation $\mathcal I$ induced by $\ell$ and $d$,
$\mathcal I (s \sigma) > \mathcal I (t \sigma)$
holds.
\end{theorem}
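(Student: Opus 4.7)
The plan is to induct on the derivation of $s >_{\rrpo}^\ell t$, uniformly in the ground substitution $\sigma$. The key preparatory step is a \emph{majorization lemma} for the auxiliary relation $\ssup^\ell$, of the form: if $s = f(s_1,\ldots,s_k;s_{k+1},\ldots,s_{k+l})$ satisfies $s\sigma \ssup^\ell t\sigma$, then
\[
\mathcal{I}(t\sigma) \;\leq\; F_{\rk(f)+\ell}^{c}\Bigl(\textstyle\sum_{j=1}^{k+l}\mathcal{I}(s_j\sigma)\Bigr)
\]
for some $c$ bounded linearly in $\ell$ and $d$. This should be provable by a separate induction on the derivation of $\ssup^\ell$, using that Case~3 strictly decreases both $\ell$ and the precedence rank of the head symbol, so at most $\ell$ precedence drops can be chained before one is forced into a normal-subterm step; the increasing and superadditive properties of $F_m$ from \cite[Lemma~5.4]{cich_weier} then absorb the remaining arithmetic into a single bounded iterate.

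With the majorization lemma in hand, Cases~1--3 of Definition~\ref{d:rrpo} are essentially routine. Case~1 is a direct application of the lemma combined with the fact that the outer factor $d^{\mathcal{J}_{K+1}(s\sigma)}$ of $\mathcal{I}(s\sigma)$ dominates any bounded iterate of $F_{\rk(f)+\ell}$ on the normal-argument sum, once $d$ is large enough. Case~2 uses $\mathcal{I}(s\sigma) > \mathcal{I}(s_i\sigma) \geq \mathcal{I}(t\sigma)$, the first inequality coming from positivity of the $\mathcal{J}_{K+1}$-exponent and the second from the induction hypothesis (or $\approx$-invariance of $\mathcal{I}$). Case~3 combines both tools: the normal arguments of $t$ are bounded via the majorization lemma and the safe arguments via the induction hypothesis, with the strict gap coming from $\rk(f) > \rk(g)$ and the rapid growth of $F_m$ in its subscript. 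Cases~4 and 5 share the feature that the head symbols of $s$ and $t$ are $\approx$-equivalent, so $\mathcal{J}_{K+1}(s\sigma)$ and $\mathcal{J}_{K+1}(t\sigma)$ are both computed with $F_{\rk(f)+\ell,K+1}$; in Case~4 componentwise domination of the normal arguments gives $\mathcal{J}_{K+1}(t\sigma) \leq \mathcal{J}_{K+1}(s\sigma)$ by monotonicity of $F_{m,n}$, and the safe-argument decrease from the induction hypothesis upgrades this to a strict inequality on $\mathcal{I}$.

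The main obstacle is Case~5. Here the lex witness $i_0$ yields, by induction, $\mathcal{I}(t_{i_0}\sigma) < \mathcal{I}(s_{i_0}\sigma)$, but the later normal arguments $t_{i_0+1},\ldots,t_m$ are only controlled through $\ssup^\ell$ (hence by the majorization lemma) and the safe arguments $t_{m+1},\ldots,t_{m+n}$ through $s >_{\rrpo} t_j$, i.e.\ in terms of $\mathcal{I}(s\sigma)$ itself. The crux is to show that this single lex decrement, inserted into the nested-iteration definition of $F_{\rk(f)+\ell,K+1}$, produces a drop in $\mathcal{J}_{K+1}(t\sigma)$ strictly larger than all of the loosely-controlled contributions combined; Lemma~\ref{l:Fmn}, which majorizes $F_{m,n}$ by $F_{m+1}^n$, is the essential arithmetic device. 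The three hypotheses $\ar(f)+1 \leq d$, $\ell(K+2)+2 \leq d$, and $|t|+1 \leq d$ are calibrated precisely to supply the numerical slack needed here: the first bounds the branching of the interpretation, the second tames $\ell$ levels of $\ssup^\ell$-nesting spread across $K+2$ iteration slots, and the third gives a size-dependent headroom that keeps the safe-argument bound strictly below $\mathcal{I}(s\sigma)$, breaking the prima facie circular dependence.
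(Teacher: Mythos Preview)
Your overall strategy---induction on the derivation of $>_{\rrpo}^\ell$, supported by a separate majorization lemma for $\ssup^\ell$---matches the paper's proof. Two points, however, need correction.

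First, your majorization lemma is stated in terms of $\sum_{j=1}^{k+l}\mathcal{I}(s_j\sigma)$, i.e.\ \emph{all} arguments of $s$. This is too weak. The paper's corresponding claim (inequality~(\ref{e:claim})) bounds $\mathcal{I}(t\sigma)$ by $F_{\rk(f)+\ell}^{\ell'\cdot(K+2)}\bigl(\sum_{j=1}^{k}\mathcal{I}(s_j\sigma)\bigr)$, using the \emph{normal} arguments only, and is formulated for $f\in\mathcal{D}$. This restriction is essential: in the interpretation $\mathcal{I}$, the safe arguments of $s$ contribute only to the multiplicative factor $\sum_{j=1}^l\mathcal{I}(s_{k+j}\sigma)+1$, never to the exponent $\mathcal{J}_{K+1}(s\sigma)$. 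If safe arguments were admitted into the majorization bound, they would feed into $\mathcal{J}_{K+1}(t\sigma)$ and hence into the exponent of $\mathcal{I}(t\sigma)$, and no comparison with $\mathcal{I}(s\sigma)$ could close. The normal-only form holds precisely because, for $f\in\mathcal{D}$, the relation $\ssup^{\ell'}$ descends only into normal argument positions (Case~\ref{d:ssup:2} of Definition~\ref{d:ssup}).

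Second, you treat Case~\ref{d:rrpo:3} as routine and reserve the circularity-breaking device for Case~\ref{d:rrpo:5}. In fact Case~\ref{d:rrpo:3} has the same difficulty: the safe arguments $t_{m+1},\ldots,t_{m+n}$ of $t$ satisfy only $s>_{\rrpo}^\ell t_j$, so the induction hypothesis yields just $\mathcal{I}(t_j\sigma)<\mathcal{I}(s\sigma)$, which is circular when one tries to assemble $\mathcal{I}(t\sigma)$ and compare it with $\mathcal{I}(s\sigma)$. The paper therefore handles Cases~\ref{d:rrpo:3} and~\ref{d:rrpo:5} together. A preliminary claim first establishes $\mathcal{J}_K(t\sigma)+d\leq\mathcal{J}_K(s\sigma)$ and $\mathcal{J}_{K+1}(t\sigma)\leq F_{\rk(f)+\ell}^{\mathcal{J}_K(s\sigma)+d-1}\bigl(\sum_{j=1}^k\mathcal{I}(s_j\sigma)\bigr)$; with these in hand, a \emph{subsidiary induction on $|t|$} proves
\[
\mathcal{I}(t\sigma)\;\leq\; d^{\,|t|\cdot\bigl(1+F_{\rk(f)+\ell}^{\mathcal{J}_K(s\sigma)+d-1}(\sum_{j=1}^k\mathcal{I}(s_j\sigma))\bigr)}\cdot\Bigl(\textstyle\sum_{j=1}^l\mathcal{I}(s_{k+j}\sigma)+1\Bigr),
\]
and only then does the hypothesis $|t|<d$ deliver the strict inequality. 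Your remark that the third hypothesis ``gives a size-dependent headroom'' gestures at this mechanism, but the proposal does not make explicit that the same subsidiary induction is already needed in Case~\ref{d:rrpo:3}.
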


\begin{proof}
Let $s, t \in \mathcal{T(F,V)}$ and 
$\sigma: \mathcal V \rightarrow \mathcal{T(F)}$ 
be a ground substitution.
As in the assumption of the theorem, choose a constant $d$ so that 
$\max \big( \{ \ar (f) +1 \mid f \in \mathcal F \} \cup 
            \{ \ell \cdot (K+2) +2 \} \cup \{ |t|+1 \}
      \big) \leq d$.
Suppose 
$s >_{\rrpo}^\ell t$.
Then we show that $\mathcal I (s \sigma) > \mathcal I (t \sigma)$ holds
by induction according to the definition of 
$s >_{\rrpo}^\ell t$.
Let $s = f(s_1, \dots, s_k; s_{k+1}, \dots, s_{k+l})$.

In the base case, $s_i \approx t$ holds for some 
$i \in \{ 1, \dots, k+l \}$.
In this case, 
$\mathcal I (t \sigma) = \mathcal I (s_i \sigma) <
 \mathcal I (s \sigma)$ holds.

The argument to show the induction step splits into several cases depending
on the final rule resulting in $s >_{\rrpo}^\ell t$.

{\sc Case.} $s \ssup^{\ell} t$:
In the subcase that $f \in \mathcal C$, 
$s_i \ssupeq^{\ell} t$ for some
$i \in \{ 1, \dots, k+l \}$, and hence
$s_i \geqslant_{\rrpo}^\ell t $ holds.
By IH (Induction Hypothesis),
$\mathcal I (t \sigma) \leq \mathcal I (s_i \sigma) <
 \mathcal I (s \sigma)$
holds.
The case that $f \in \mathcal D$ follows
 from the following claim.

\begin{claim}
Suppose that $2 \leq \ell' \leq \ell$ holds.
If 
$f \in \mathcal D$ and
$s \ssup^{\ell'} t$,
then
(for the interpretation $\mathcal I$ induced by $\ell$)
 the following inequality holds.
\begin{equation}
 \mathcal I (t \sigma) \leq 
 F_{\rk (f) + \ell}^{\ell' \cdot (K+2)} 
 \left( \textstyle{\sum}_{j=1}^k \mathcal I (s_j \sigma) \right)
\label{e:claim}
\end{equation} 
\end{claim}

By the assumption of the theorem,
$\ell' \cdot (K+2) \leq \ell \cdot (K+2) \leq d$
holds.
This implies 
$\mathcal I (t \sigma) \leq 
 F_{\rk (f) + \ell}^{d} 
 \left( \sum_{j=1}^k \mathcal I (s_j \sigma) \right) \leq
 \mathcal J_K (s \sigma) <
 \mathcal I (s \sigma)$.

\begin{proof}[of Claim]
By induction according to the definition of $\ssup^{\ell'}$.
Write $\mathcal H_{\ell'} (s)$ to abbreviate
$F_{\rk (f) + \ell}^{\ell' \cdot (K+2)} 
 \left( \sum_{j=1}^k \mathcal I (s_j \sigma) \right)$.

{\sc Case.} $s_i \ssupeq^{\ell'} t$ for some
$i \in \{ 1, \dots, k \}$:
Let us observe that  $s_i \ssupeq^{\ell} t$ also holds.
This implies $s_i \geqslant_{\rrpo}^\ell t $.
Hence
$\mathcal I (t \sigma) \leq \mathcal I (s_i \sigma)$
holds by IH for the theorem,
and thus
$\mathcal I (t \sigma) \leq \mathcal H_{\ell'} (s)$
also holds.

{\sc Case.}
$t = g(t_1, \dots, t_m; t_{m+1}, \dots, t_{m+n})$
for some $g \in \mathcal F$ and 
$t_1, \dots, t_{m+n} \in \mathcal{T(F,V)}$ such that
$f >_{\mathcal F} g$ and 
$s \ssup^{\ell' -1} t_j$ for all $j \in \{ 1, \dots, m+n \}$.
By IH for the claim,
$\mathcal I (t_j \sigma) \leq \mathcal H_{\ell' -1} (s)$
holds for all $j \in \{ 1, \dots, m+n \}$.
Since $m+n = \ar (g) \leq d$ by the assumption of the theorem, 
$\sum_{j=1}^{m+n} \mathcal I (t_j \sigma) +d \leq
 d (\mathcal H_{\ell' -1} (s) + 1) \leq 
 F_{\rk (f) + \ell} ( H_{\ell' -1} (s))$
holds. 
This implies
\begin{eqnarray}
&&
\textstyle{\sum}_{j=1}^{m+n} \mathcal I (t_j \sigma)
+d 
\nonumber \\
& \leq &
F_{\rk (f) + \ell}^{(\ell' -1) \cdot (K+2) +1} 
\left( \textstyle{\sum}_{j=1}^k \mathcal I (s_j \sigma) \right)
=
F_{\rk (f) + \ell}^{\ell' \cdot (K+2) -K -1} 
\left( \textstyle{\sum}_{j=1}^k \mathcal I (s_j \sigma) \right).
\label{e:sih}
\end{eqnarray}
On the other side, since $\rk (g) < \rk (f)$ by the definition of the rank $\rk$,
we can find a natural $p$ such that
$\rk (g) + \ell \leq p < \rk (f) + \ell$.
Hence it holds that
$\mathcal I (t \sigma) =
 d^{\mathcal J_{K+1} (t \sigma)} \cdot 
 \left( \textstyle{\sum}_{j=1}^{n} \mathcal I (t_{m+j} \sigma) +1
 \right)
 \leq 
 d^{\mathcal J_{K+1} (t \sigma) +
    \sum_{j=1}^n \mathcal I (t_{m+j} \sigma)
    + 1
   } 
 \leq 
  F_p \left( \mathcal J_{K+1} (t \sigma) +
             \textstyle{\sum}_{j=1}^n \mathcal I (t_{m+j} \sigma)
      \right)$.
Thus, to conclude the claim, it suffices to show that
$F_p \left( \mathcal J_{K+1} (t \sigma) +
            \textstyle{\sum}_{j=1}^n \mathcal I (t_{m+j} \sigma)
     \right)
 \leq \mathcal H_{\ell'} (s)$
holds.
To show this inequality, we reason as follows.
\begin{eqnarray*}
&&
F_p \left( \mathcal J_{K+1} (t \sigma) +
           \textstyle{\sum}_{j=1}^n \mathcal I (t_{m+j} \sigma)
    \right)
\\ &\leq&
F_p \left( F_{p}^{\mathcal J_{K} (t \sigma) +d} 
     \left( \textstyle{\sum}_{j=1}^m \mathcal I (t_j \sigma)
     \right)
     + \textstyle{\sum}_{j=1}^n \mathcal I (t_{m+j} \sigma)
    \right)
\\ &\leq&
F_{p}^{1+ \mathcal J_{K} (t \sigma) +d}
\left(\textstyle{\sum}_{j=1}^{m+n} \mathcal I (t_j \sigma)
\right)
\qquad (\text{by strict increasingness of } F_p)
\\ &\leq&
F_p^{1+ F_{p+1}^K 
     \left( \sum_{j=1}^m \mathcal I (t_j \sigma)
     \right) +d
    }
\left(\textstyle{\sum}_{j=1}^{m+n} \mathcal I (t_j \sigma)
\right)
\qquad \text{(by Lemma \ref{l:Fmn})}
\\ &\leq&
F_p^{1+ F_{p+1}^K 
     \left(\sum_{j=1}^m \mathcal I (t_j \sigma) +d
     \right)
    }
\left( \textstyle{\sum}_{j=1}^{m+n} \mathcal I (t_j \sigma)
\right)
\\ &\leq&
F_p^{1+ F_{p+1}^K 
     \left( F_{p+1}^{\ell' \cdot (K+2) -K -1} 
            \left( \sum_{j=1}^k \mathcal I (s_j \sigma)
            \right)
     \right) 
    }
\left( F_{p+1}^{\ell' \cdot (K+2) -K-1} 
       \left( \textstyle{\sum}_{j=1}^k \mathcal I (s_j \sigma)
       \right)
\right)
\\ &\leq&
F_p^{1+ F_{p+1}^{\ell' \cdot (K+2) -1} 
     \left( \sum_{j=1}^k \mathcal I (s_j \sigma)
     \right)
    }
\left( F_{p+1}^{\ell' \cdot (K+2) -1} 
       \left( \textstyle{\sum}_{j=1}^k \mathcal I (s_j \sigma)
       \right)
\right)
\\ &\leq&
F_{p+1} 
\left( F_{p+1}^{\ell' \cdot (K+2) -1} 
       \left( \textstyle{\sum}_{j=1}^k \mathcal I (s_j \sigma)
       \right)
\right)
=
\mathcal H_{\ell'} (s).
\end{eqnarray*} 
The fifth inequality follows from the inequality (\ref{e:sih}).
\qed
\end{proof}

{\sc Case.}
$s_i >_{\rrpo}^\ell t$ holds for some 
$i \in \{ 1, \dots, k+l \}$:
In this case
$\mathcal I (t \sigma) < \mathcal I (s_i \sigma)$ 
by IH, and hence
$\mathcal I (t \sigma) < \mathcal I (s \sigma)$ 
holds.

{\sc Case.}
$f \in \mathcal D \setminus \mathcal D_{\lex}$ and
$t = g(t_1, \dots, t_k; t_{k+1}, \dots, t_{k+l})$ for some
$g$ such that $f \approx_{\mathcal F} g$, 
$(s_{1}, \dots, s_{k}) \geqslant_{\rrpo}^\ell
         (t_{1}, \dots, t_{k})$,
        and   
$(s_{k+1}, \dots, s_{k+l}) >_{\rrpo}^\ell
         (t_{k+1}, \dots, t_{k+l})$:
In this case, 
$\rk (f) = \rk (g)$, and by IH,
$\mathcal I (t_j \sigma) \leq \mathcal I (s_j \sigma)$
for all $j \in \{ 1, \dots, k+l \}$
and additionally
$\mathcal I (t_i \sigma) < \mathcal I (s_i \sigma)$
for some $i \in \{ k+1, \dots, k+l \}$.
Hence it is easy check that
$\mathcal I (t \sigma) < \mathcal I (s \sigma)$
holds.

It remains to consider Case \ref{d:rrpo:3} and \ref{d:rrpo:5} of Definition
 \ref{d:rrpo}.

\begin{claim}
In Case \ref{d:rrpo:3} and \ref{d:rrpo:5} of Definition
 \ref{d:rrpo}, the following two inequalities hold.
  \begin{eqnarray}
   \mathcal J_{K} (t \sigma) + d & \leq & \mathcal J_{K}(s \sigma),
  \label{c:1} \\
   \mathcal J_{K+1} (t \sigma)
   & \leq &
   F_{\rk (f) + \ell}^{J_K (s \sigma) +d-1} 
   \left( \textstyle{\sum}_{j=1}^k \mathcal I (s_j \sigma)
   \right)
  \label{c:2}
  \end{eqnarray} 
\end{claim}
\begin{proof}[of Claim]
We show the inequality (\ref{c:1}) by case analysis.

{\sc Case.}
$f \in \mathcal D$ and 
$t = g(t_1, \dots, t_m; t_{m+1}, \dots, t_{m+n})$ for some
$g$ such that $f >_{\mathcal F} g$, 
$s \ssup^\ell t_j$ for all $j \in \{ 1, \dots, m \}$, and
$s >_{\rrpo}^\ell t_j$ for all $j \in \{ m+1, \dots, m+n \}$:
Since $\rk (g) < \rk (f)$, as in the proof of the previous claim, we can
 find a natural $p$ such that 
$\rk (g) + \ell \leq p < \rk (f) + \ell$.
By auxiliary induction on $j \in \{ 1, \dots, m-1 \}$ we show the
 following inequality.
\[
 \mathcal J_{j} (t \sigma) + d (1+ \mathcal I (t_{j+1} \sigma))\leq 
 F_{p+1}^{d+(j-1)} 
 \left( \textstyle{\sum}_{i=1}^k \mathcal I (s_i \sigma)
 \right)
\]
By the inequality (\ref{e:claim}) and the
 assumption
$\ell \cdot (K+1) + 2 \leq d$,
for any $j \in \{ 1, \dots, m \}$,
$\sum_{i=1}^m \mathcal I (t_i \sigma) + 
 d(1 + \mathcal I (t_{j} \sigma)) \leq
 (K+1) F_{p+1}^{d-2} 
 (\sum_{i=1}^k \mathcal I (s_i \sigma)) + d
 \leq
 d \left( 1 + F_{p+1}^{d-2} 
          (\sum_{i=1}^k \mathcal I (s_i \sigma))
   \right)
 \leq
 F_{p+1}^{d - 1} 
 (\sum_{i=1}^k \mathcal I (s_i \sigma))
$.
For the base case we reason as follows.
\begin{eqnarray*}
&&
 \mathcal J_1 (t \sigma) + d(1 + \mathcal I (t_2 \sigma)) \\
&\leq&
 F_p^{d(1 + \mathcal I (t_i \sigma))} 
 \big( \textstyle{\sum}_{i=1}^m \mathcal I (t_i \sigma)
 \big)
 + d(1 + \mathcal I (t_2 \sigma)) \\
&\leq&
 F_p^{d(1 + \mathcal I (t_1 \sigma))} 
 \big( \textstyle{\sum}_{i=1}^m 
       \mathcal I (t_i \sigma) + d(1 + \mathcal I (t_2 \sigma))
 \big) \\
&\leq&
 F_p^{d \left( 1 + F_{p+1}^{d-1} 
               \left( \sum_{i=1}^k \mathcal I (s_i \sigma) 
               \right) 
        \right)
     }
 \left( F_{p+1}^{d-1} 
               \left( \textstyle{\sum}_{i=1}^k \mathcal I (s_i \sigma) 
               \right)
 \right) 
=
 F_{p+1}^{d} \left( \textstyle{\sum}_{i=1}^k \mathcal I (s_i \sigma) 
             \right).
\end{eqnarray*}
For the induction step we reason as follows.
\begin{eqnarray*}
&&
 \mathcal J_{j+1} (t \sigma) + d(1 + \mathcal I (t_{j+2} \sigma)) \\
&\leq&
 F_p^{\mathcal J_{j} (t \sigma) + 
      d(1 + \mathcal I (t_{j+1} \sigma))
     } 
 \big( \textstyle{\sum}_{i=1}^m \mathcal I (t_i \sigma)
 \big)
 + d(1 + \mathcal I (t_{j+2} \sigma)) \\
&\leq&
 F_p^{\mathcal J_{j} (t \sigma) + 
      d(1 + \mathcal I (t_{j+1} \sigma))
     } 
 \big( \textstyle{\sum}_{i=1}^m 
       \mathcal I (t_i \sigma) + d(1 + \mathcal I (t_{j+2} \sigma))
 \big) \\
&\leq&
 F_p^{F_{p+1}^{d+(j-1)} 
      \left(\sum_{i=1}^k \mathcal I (s_i \sigma)
      \right)
     }
 \left( F_{p+1}^{d-1} 
               \left( \textstyle{\sum}_{i=1}^k \mathcal I (s_i \sigma) 
               \right)
 \right) 
\ (\text{by induction hypothesis}) \\
&\leq&
 F_{p}^{d \left( 1+
                   F_{p+1}^{d+(j-1)}
                   \left( \sum_{i=1}^k 
                          \mathcal I (s_i \sigma)
                   \right)
            \right)
         } 
 \left( F_{p+1}^{d+(j-1)}
        \left( \textstyle{\sum}_{i=1}^k \mathcal I (s_i \sigma)
        \right)
 \right) \\
&=& F_{p+1}^{d+j}
    \left( \textstyle{\sum}_{i=1}^k \mathcal I (s_i \sigma)
    \right).
\end{eqnarray*}
By similar induction on $j \in \{ m, \dots, K \}$
one can show that
$\mathcal J_{j} (t \sigma) + d \leq
 F_{p+1}^{d + (j-1)} (\sum_{i=1}^k \mathcal I (s_i \sigma))$
holds, where the inequality
$\mathcal J_{m-1} (t \sigma) + d (1+ \mathcal I (t_{m} \sigma))\leq 
 F_{p+1}^{d + (m-2)} (\sum_{i=1}^k \mathcal I (s_i \sigma))$
takes place in the base case.
Thus in particular,
$\mathcal J_{K} (t \sigma) + d \leq
 F_{p+1}^{d + (K-1)} (\sum_{i=1}^k \mathcal I (s_i \sigma)) \leq
 F_{p+1}^{2d} (\sum_{i=1}^k \mathcal I (s_i \sigma)) \leq
 \mathcal J_K (s \sigma)
$
holds.

{\sc Case.}
$f \in \mathcal D_{\lex}$ and
$t = g(t_1, \dots, t_m; t_{m+1}, \dots, t_{m+n})$
for some $g \in \mathcal F$ such that
$f \approx_{\mathcal F} g$:
In this case there exists $i_{0} \leq \min \{ k, m\}$ such that
$s_j \approx t_j$ for all $j \in \{ 1, \dots, i_0 -1 \}$,
$s_{i_0} >_{\rrpo}^{\ell} t_{i_0}$,
$s \ssup^{\ell} t_j$ for all $j \in \{ i_0 +1, \dots, m \}$, and
$s >_{\rrpo}^{\ell} t_j$ for all $j \in \{ m+1, \dots, m+n \}$.
Write $p$ to denote $\rk (f) + \ell$.
Then $p = \rk (g) + \ell$ since $\rk (f) = \rk (g)$.
By induction on $j \in \{ 0, \dots, i_0 -1 \}$
it can be shown that
$\mathcal J_{j} (t \sigma) = \mathcal J_{j} (s \sigma)$
holds.
Since
$\mathcal I (t_{i_0} \sigma) < \mathcal I (s_{i_0} \sigma)$
holds by IH for the theorem,
$\mathcal J_{i_0 -1} (t \sigma) = \mathcal J_{i_0 -1} (s \sigma)$
implies
\begin{equation}
\mathcal J_{i_0 -1} (t \sigma) + d (1+ \mathcal I (t_{i_0} \sigma))
 \leq
\mathcal J_{i_0 -1} (s \sigma) + d \cdot \mathcal I (s_{i_0} \sigma).
\label{e:subclaim:base}
\end{equation} 
By auxiliary induction on $j \in \{ i_0, \dots, m-1 \}$ we show that
$\mathcal J_{j} (t \sigma) + d (1 + \mathcal I (t_{j+1} \sigma))
 \leq \mathcal J_{j} (s \sigma)
$
holds.
From the inequalities (\ref{e:claim}),
$\mathcal I (t_{i_0} \sigma) < \mathcal I (s_{i_0} \sigma)$
and $\ell \cdot (K+2) +1 \leq d$,
as in the previous case,
for any $j \in \{ 1, \dots, m \}$,
one can show that
$\sum_{i=1}^{m} \mathcal I (t_i \sigma)
 + d (1+ \mathcal I (t_{j} \sigma))
 \leq
 F_p^{d} 
 \left( \sum_{i=1}^{k} \mathcal I (s_i \sigma) 
 \right)
$.
Assuming this inequality, for the base case we reason follows.
\begin{eqnarray*}
&&
\mathcal J_{i_0} (t \sigma) + d (1+ \mathcal I (t_{i_0 +1} \sigma)) 
\\ &\leq&
 F_p^{\mathcal J_{i_0 -1} (t \sigma) + d (1+ \mathcal I (t_{i_0} \sigma))
     }
 \big( \textstyle{\sum}_{i=1}^{m} \mathcal I (t_{i} \sigma)
 \big)
 + d (1+ \mathcal I (t_{i_0 +1} \sigma))
\\ &\leq&
 F_p^{\mathcal J_{i_0 -1} (s \sigma) + d \cdot \mathcal I (s_{i_0} \sigma)
     }
 \big( \textstyle{\sum}_{i=1}^{m} \mathcal I (t_i \sigma)
 \big)
 + d (1+ \mathcal I (t_{i_0 +1} \sigma))
\ \text{(by the inequality (\ref{e:subclaim:base}))}
\\ &\leq&
 F_p^{\mathcal J_{i_0 -1} (s \sigma) + d \cdot \mathcal I (s_{i_0} \sigma)
     }
 \big( \textstyle{\sum}_{i=1}^{m} \mathcal I (t_i \sigma)
        + d (1+ \mathcal I (t_{i_0 +1} \sigma))
 \big)
\\ &\leq&
 F_p^{\mathcal J_{i_0 -1} (s \sigma) + d \cdot \mathcal I (s_{i_0} \sigma)
     }
 \left( F_p^{d} 
        \left( \textstyle{\sum}_{i=1}^{k} \mathcal I (s_i \sigma)
        \right)
 \right)
\\ &=&
 F_p^{\mathcal J_{i_0 -1} (s \sigma) + d (1+ \mathcal I (s_{i_0} \sigma))
     }
 \left( \textstyle{\sum}_{i=1}^{k} \mathcal I (s_i \sigma)
 \right)
=
\mathcal J_{i_0} (s \sigma).
\end{eqnarray*}
For the induction step we reason as follows.
\begin{eqnarray*}
&&
\mathcal J_{j+1} (t \sigma) + d (1+ \mathcal I (t_{j+2} \sigma)) 
\\ &\leq&
 F_p^{\mathcal J_{j} (t \sigma) + d (1+ \mathcal I (t_{j+2} \sigma))
     }
 \big( \textstyle{\sum}_{i=1}^{m} \mathcal I (t_{i} \sigma)
 \big)
 + d (1+ \mathcal I (t_{j+2} \sigma))
\\ &\leq&
 F_p^{\mathcal J_{j} (s \sigma)
     }
 \big( \textstyle{\sum}_{i=1}^{m} \mathcal I (t_i \sigma)
 \big)
 + d (1+ \mathcal I (t_{j+2} \sigma))
\qquad \text{(by induction hypothesis)}
\\ &\leq&
 F_p^{\mathcal J_{j} (s \sigma)
     }
 \big( \textstyle{\sum}_{i=1}^{m} \mathcal I (t_i \sigma)
        + d (1+ \mathcal I (t_{i_0 +1} \sigma))
 \big)
\\ &\leq&
 F_p^{\mathcal J_{j} (s \sigma)
     }
 \left( F_p^{d} 
        \left( \textstyle{\sum}_{i=1}^{k} \mathcal I (s_i \sigma)
        \right)
 \right)
\\ &=&
 F_p^{\mathcal J_{j} (s \sigma) + d
     }
 \left( \textstyle{\sum}_{i=1}^{k} \mathcal I (s_i \sigma)
 \right)
\leq
\mathcal J_{i_0} (s \sigma).
\end{eqnarray*}
A similar induction on
$j \in \{ m, \dots, K \}$
allows one to deduce
$\mathcal J_j (t \sigma) +d \leq \mathcal J_j (s \sigma)$.
Thus, in particular,
$\mathcal J_{K} (t \sigma) + d \leq \mathcal J_{K}(s \sigma)$
holds.

Let 
$t= g(t_1, \dots, t_m; t_{m+1}, \dots, t_{m+n})$
and $p = \rk (f) + \ell$.
Then the inequality (\ref{c:2}) is shown employing the inequality (\ref{c:1}) as follows.
\begin{eqnarray*}
&&
\mathcal J_{K+1} (t \sigma) \\
&=&
F_p^{\mathcal J_{K} (t \sigma) + d} 
\left( \textstyle{\sum}_{j=1}^m \mathcal I (t_j \sigma)
\right)
\\ 
&\leq&
F_p^{\mathcal J_{K} (s \sigma)} 
\left( \textstyle{\sum}_{j=1}^m \mathcal I (t_j \sigma)
\right)
\qquad (\text{by the inequality (\ref{c:1})}) \\
&\leq&
F_p^{\mathcal J_{K} (s \sigma)} 
\left( F_p^{d-1} 
       \left( \textstyle{\sum}_{j=1}^k \mathcal I (s_j \sigma)
       \right)
\right)
=
F_p^{\mathcal J_K (s \sigma) +d-1} 
\left( \textstyle{\sum}_{j=1}^k \mathcal I (s_j \sigma)
\right).
\end{eqnarray*} 
Note that the last inequality follows from the inequality (\ref{e:claim}).
\qed
\end{proof}

Let us turn back to the proof of the theorem.
In the remaining two cases, instead of showing 
$\mathcal I (t \sigma) < \mathcal I (s \sigma)$ 
directly, 
by subsidiary induction on the size $|t|$ of the term $t$,
we show the following inequality holds. 
\begin{equation} 
\mathcal I (t \sigma) \leq
d^{|t| \cdot 
   \left( 1 + F_{\rk (f) + \ell}^{\mathcal J_K (s \sigma) +d-1} 
          \left( \sum_{j=1}^k \mathcal I (s_j \sigma)
          \right)
   \right)
  }
\cdot
\left( \textstyle{\sum}_{j=1}^l \mathcal I (s_{k+j} \sigma) +1
\right)
\label{e:main}
\end{equation}

Write $p$ to denote $\rk (f) + \ell$.
Since $|t| < d$ holds by the assumption of the theorem and 
$d(1+x) \leq F_p (x)$ holds for any $x$,
the inequality (\ref{e:main}) allows us to conclude that
$\mathcal I (t \sigma) < 
 d^{F_p \left( F_{p}^{\mathcal J_K (s \sigma) +d-1} 
               \left( \sum_{j=1}^k \mathcal I (s_j \sigma)
               \right)
        \right)
   }
\cdot
\left( \sum_{j=1}^l \mathcal I (s_{k+j} \sigma) +1
\right)
= \mathcal I (s \sigma)$.
Case \ref{d:rrpo:3} and \ref{d:rrpo:5} of Definition
 \ref{d:rrpo} can be reduced to the
 following case.

{\sc Case.}
$t = g(t_1, \dots, t_m; t_{m+1}, \dots, t_{m+n})$
for some $g \in \mathcal D$ and 
$t_1, \dots, t_{m+n} \in \mathcal{T(F,V)}$ such that
$s >_{\rrpo}^{\ell} t_j$ holds for all $j \in \{ m+1, \dots, m+n \}$:
Since $|t_{m+j}| \leq |t| -1$, subsidiary induction hypothesis
 together with the assumption that
$\max \{ \ar (f) +1 \mid f \in \mathcal F \} \leq d$
 yields
\begin{eqnarray}
&&  \textstyle{\sum}^n_{j=1} \mathcal I (t_{m+j} \sigma) + 1 
\nonumber \\
& \leq &
 d \cdot 
 d^{(|t| -1) \cdot 
    \left( 1 + F_{p}^{\mathcal J_K (s \sigma) +d-1} 
           \left( \sum_{i=1}^k \mathcal I (s_i \sigma)
           \right)
    \right)
   }
 \cdot
 \left( \textstyle{\sum}_{i=1}^l \mathcal I (s_{k+i} \sigma) +1
 \right)
\nonumber \\
&=&
 d^{(|t| -1) \cdot 
    F_{p}^{\mathcal J_K (s \sigma) +d-1} 
           \left( \sum_{i=1}^k \mathcal I (s_i \sigma)
           \right)
    +|t| 
   }
 \cdot
 \left( \textstyle{\sum}_{i=1}^l \mathcal I (s_{k+i} \sigma) +1
 \right).
\label{e:SIH}
\end{eqnarray}
This enables us to reason as follows.
\begin{eqnarray*}
&&
\mathcal I (t \sigma) \\
&=&
d^{J_{K+1} (t \sigma)} 
\cdot 
\left( \textstyle{\sum}_{j=1}^n \mathcal I (t_{m+j} \sigma) +1
\right)
\\ 
&\leq&
d^{F_{p}^{J_K (s \sigma) +d-1} 
   \left( \sum_{j=1}^k \mathcal I (s_j \sigma)
   \right)
  }
\cdot 
\left( \textstyle{\sum}_{j=1}^n \mathcal I (t_{m+j} \sigma) +1
\right)
\ (\text{by the inequality (\ref{c:2})})
\\ &\leq&
d^{F_{p}^{J_K (s \sigma) +d-1} 
   \left( \sum_{j=1}^k \mathcal I (s_j \sigma)
   \right)
  }
\cdot \\
&& \quad \
d^{(|t| -1)
   F_{p}^{\mathcal J_K (s \sigma) +d-1} 
          \left( \sum_{j=1}^k \mathcal I (s_j \sigma)
          \right)
          + |t|
  }
\cdot 
\left( \textstyle{\sum}_{j=1}^l \mathcal I (s_{k+j} \sigma) +1
\right)
\quad (\text{by (\ref{e:SIH})})
\\ &=&
d^{|t| \cdot 
   \left( 1 + F_{p}^{\mathcal J_K (s \sigma) +d-1} 
          \left( \sum_{j=1}^k \mathcal I (s_j \sigma)
          \right)
   \right)
  }
\cdot
\left( \textstyle{\sum}_{j=1}^l \mathcal I (s_{k+j} \sigma) +1
\right).
\end{eqnarray*}
This finalises the proof of the theorem
\qed
\end{proof}

\begin{lemma}
\label{l:context}
Let $s, t \in \mathcal{T(F)}$ be ground terms and 
$C(\Box) \in \mathcal{T(F \cup \{ \Box \})}$
be a (ground) context.
If $\mathcal I (s) > \mathcal I (t)$, then
$\mathcal I (C(s)) > \mathcal I (C(t))$ holds.
\end{lemma}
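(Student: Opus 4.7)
The plan is a straightforward structural induction on the ground context $C(\Box)$, which reduces the claim to strict monotonicity of $\mathcal{I}$ in each single argument of every function symbol. The base case $C(\Box)=\Box$ is immediate from the hypothesis $\mathcal{I}(s)>\mathcal{I}(t)$.

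For the induction step, write $C(\Box)=f(u_1,\dots,u_{i-1},C'(\Box),u_{i+1},\dots,u_{k+l})$ and set $v:=C'(s)$, $v':=C'(t)$. By induction hypothesis $\mathcal{I}(v)>\mathcal{I}(v')$, so the task is to show that replacing the $i$-th argument of $f$ by a term of strictly larger interpretation strictly increases the overall value. Unfolding the defining equation
\[
\mathcal{I}(f(w_1,\dots,w_k;w_{k+1},\dots,w_{k+l}))=d^{F_{\rk(f)+\ell,K+1}(\mathcal{I}(w_1),\dots,\mathcal{I}(w_k))}\cdot\bigl(\textstyle\sum_{j=1}^{l}\mathcal{I}(w_{k+j})+1\bigr),
\]
I split on whether the hole sits in a safe or a normal position. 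If $i\in\{k+1,\dots,k+l\}$, the exponent is unchanged while the factor $\sum_{j=1}^{l}\mathcal{I}(u_{k+j})+1$ strictly grows when $\mathcal{I}(v')$ is replaced by $\mathcal{I}(v)$, so the product strictly grows. If $i\in\{1,\dots,k\}$, the safe factor is unchanged; it suffices to show that $F_{\rk(f)+\ell,K+1}(x_1,\dots,x_k)$ is strictly monotone in $x_i$, for then the exponent strictly increases and, since $d\geq 2$, the value $d^{\text{exponent}}$ at least doubles.

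The only mildly non-routine ingredient is an auxiliary claim: for every $m$ and every $n\geq 1$, the function $F_{m,n}(x_1,\dots,x_k)$ is strictly increasing in $x_j$ for all $j\leq\min(n,k)$. I would prove this by induction on $n$, using the well-known facts that $F_m$ is strictly increasing and $F_m(x)>x$ for all $x$ (these are precisely the properties of $F_m$ recorded in \cite[Lemma 5.4]{cich_weier} and already invoked by the author), so that a strictly larger base together with at least as many iterations of $F_m$ yields a strictly larger output, and, symmetrically, strictly more iterations from the same base do so as well. Since $K$ dominates the normal arity of every function symbol, $K+1>k$, hence $F_{\rk(f)+\ell,K+1}$ lies in the ``$k\leq n$'' clause of the definition and the auxiliary claim applies to all $k$ normal coordinates.

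I do not anticipate a substantive obstacle: the statement is a pure congruence property of a monotone interpretation, and all of the arithmetic work has effectively been done already in the proof of Theorem \ref{t:main}. The only bookkeeping lies in the auxiliary monotonicity of $F_{m,n}$, which is an easy inductive unfolding of its recursive definition.
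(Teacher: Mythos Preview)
Your proposal is correct and follows exactly the approach indicated by the paper, which gives only the one-line hint ``By induction on the size $|C|$ of the given context.'' You have merely spelled out the details the paper omits: the case split on safe versus normal hole position and the auxiliary strict monotonicity of $F_{m,n}$ in each of its first $\min(n,k)$ coordinates, all of which go through as you describe.
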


\begin{proof}
By induction on the size $|C|$ of the given context 
$C \in \mathcal{T(F \cup \{ \Box \})}$.
\end{proof}

For a rewrite system $\mathcal R$, we write 
$\mathcal R \subseteq >_{\rrpo}$
(or $\mathcal R \subseteq >_{\rrpo}^\ell$)
if $l >_{\rrpo} r$ 
(or $l >_{\rrpo}^\ell r$ respectively) holds
for each rewriting rule $l \rightarrow r \in \mathcal R$.

\begin{theorem}
\label{t:embed}
Let $\mathcal R$ be a rewrite system over a signature $\mathcal F$ such
 that $\mathcal R \subseteq >_{\rrpo}^\ell$
for some $\ell \geq 2$
and $s, t \in \mathcal{T(F)}$
be ground terms. 
Suppose
$\max \big( \{ \ar (f) +1 \mid f \in \mathcal F \} \cup
            \{ \ell \cdot (K+2) +2 \} \cup
            \{ |r| +1 \mid \exists l (l \rightarrow r \in \mathcal R) \}
      \big) \leq d$.
If $s \rightarrow_{\mathcal R} t$, then,
for the interpretation induced by $\ell$ and $d$,
$\mathcal I (s) > \mathcal I (t)$
holds.
\end{theorem}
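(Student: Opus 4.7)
The plan is to unfold what $s \to_{\mathcal R} t$ means and then reduce the statement to the two results already proved, namely Theorem \ref{t:main} and Lemma \ref{l:context}. By definition of rewriting, there exist a rule $l \to r \in \mathcal R$, a context $C(\Box) \in \mathcal{T(F \cup \{\Box\})}$, and a substitution $\sigma \colon \mathcal V \to \mathcal{T(F)}$ (ground because $s, t$ are ground) such that $s = C(l\sigma)$ and $t = C(r\sigma)$. The context $C$ and the substitution $\sigma$ are fixed once the particular rewrite step is chosen.

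First I would verify that the hypotheses of Theorem \ref{t:main} are satisfied for this pair $l, r$. Since $\mathcal R \subseteq >_{\rrpo}^\ell$, we have $l >_{\rrpo}^\ell r$. The constant $d$ has been chosen in the hypothesis of Theorem \ref{t:embed} so that $d$ dominates $\ar(f) + 1$ for every $f \in \mathcal F$, dominates $\ell \cdot (K+2) + 2$, and also dominates $|r| + 1$ uniformly over all right-hand sides of rules in $\mathcal R$. Thus for our specific $r$ we have $|r| + 1 \leq d$, which is exactly the instance-specific bound that Theorem \ref{t:main} demands. Applying that theorem to $l >_{\rrpo}^\ell r$ and the ground substitution $\sigma$ yields
\[
  \mathcal I(l\sigma) > \mathcal I(r\sigma).
\]

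Next I would invoke Lemma \ref{l:context} with $C$ in the role of the ambient context: since $l\sigma$ and $r\sigma$ are ground terms with $\mathcal I(l\sigma) > \mathcal I(r\sigma)$, the lemma lifts the strict inequality through $C$ to give
\[
  \mathcal I(s) = \mathcal I(C(l\sigma)) > \mathcal I(C(r\sigma)) = \mathcal I(t),
\]
which is the desired conclusion. There is no real obstacle here; the only point requiring care is confirming that the global bound on $d$ assumed in Theorem \ref{t:embed} is strong enough to yield the per-rule bound $|r| + 1 \leq d$ needed to invoke Theorem \ref{t:main}, which is immediate from the way the maximum is taken over all rewrite rules of $\mathcal R$.
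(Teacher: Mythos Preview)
Your proposal is correct and is essentially the same argument as the paper's. The paper phrases it as an induction on the (inductive definition of) the rewrite relation, with the base case handled by Theorem~\ref{t:main} and the induction step by Lemma~\ref{l:context}; you instead unfold $s \to_{\mathcal R} t$ directly as $s = C(l\sigma)$, $t = C(r\sigma)$ and apply the two results in sequence, which amounts to the same thing since Lemma~\ref{l:context} already packages the induction over the context.
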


\begin{proof}
By induction according to the rewriting relation 
$\rightarrow_{\mathcal R}$ resulting in
$s \rightarrow_{\mathcal R} t$.
The base case follows from Theorem \ref{t:main} and the induction step
 follows from Lemma \ref{l:context}.
\end{proof}

\begin{corollary}
\label{c:bound}
For any rewrite system $\mathcal R$, if
$\mathcal R \subseteq >_{\rrpo}$ holds for some PLPO $>_{\rrpo}$, 
then the length of any rewriting sequence in $\mathcal R$ starting with
 a ground term is bounded by a primitive recursive function in the
 size of the starting term.
\end{corollary}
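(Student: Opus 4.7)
The plan is to combine Theorem \ref{t:embed} with a primitive recursive bound on the interpretation $\mathcal I$ in terms of term size. First, given a finite $\mathcal R \subseteq\, >_{\rrpo}$, I need to produce some $\ell \geq 2$ with $\mathcal R \subseteq\, >_{\rrpo}^{\ell}$. Each derivation of $l >_{\rrpo} r$ (for $l \to r \in \mathcal R$) uses only finitely many nested applications of Case 3 of $\ssup$ from Definition \ref{d:ssup}, so $\ssup\, = \bigcup_{\ell \geq 2} \ssup^{\ell}$ on any pair of terms; taking $\ell$ to be one more than the maximum such nesting depth occurring in the (finitely many) derivations witnessing $\mathcal R \subseteq\, >_{\rrpo}$ therefore suffices. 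I then fix a constant $d$ satisfying the hypothesis of Theorem \ref{t:embed}, namely
\[
 d \geq \max\bigl(\{\ar(f)+1 \mid f \in \mathcal F\} \cup \{\ell\cdot(K+2)+2\} \cup \{|r|+1 \mid l \to r \in \mathcal R\}\bigr),
\]
which is a well-defined natural since $\mathcal F$ and $\mathcal R$ are finite.

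Next, for any ground term $s_0 \in \mathcal{T(F)}$ and any rewrite sequence $s_0 \to_{\mathcal R} s_1 \to_{\mathcal R} \cdots \to_{\mathcal R} s_n$, iterated application of Theorem \ref{t:embed} yields a strictly decreasing chain $\mathcal I(s_0) > \mathcal I(s_1) > \cdots > \mathcal I(s_n) \geq 0$ in $\mathbb N$. Hence $n \leq \mathcal I(s_0)$, and it only remains to bound $\mathcal I(s_0)$ by a primitive recursive function of $|s_0|$.

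Let $R = \max\{\rk(f) \mid f \in \mathcal F\}$. Unfolding the definition of $\mathcal I$ and using monotonicity of $F_{m,K+1}$, for every ground term $t$ with immediate subterms $t_1, \dots, t_{k+l}$ one obtains
\[
 \mathcal I(t) \leq d^{F_{R+\ell,\, K+1}(M, \dots, M)} \cdot (d \cdot M + 1), \qquad M := \max_{i} \mathcal I(t_i),
\]
where $d \geq \max\{l \mid f \in \mathcal F\}$ bounds the number of safe arguments. Call the right-hand side $\Phi(M)$; this $\Phi$ is a fixed primitive recursive function depending only on $\mathcal R$ (via $R$, $K$, $\ell$ and $d$). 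A straightforward induction on $|t|$ then yields $\mathcal I(t) \leq \Phi^{|t|}(1)$, and since iteration of a fixed primitive recursive function is primitive recursive, the map $n \mapsto \Phi^{n}(1)$ is primitive recursive. Therefore any rewrite sequence from $s_0$ has length at most $\Phi^{|s_0|}(1)$, yielding the required bound.

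The main subtlety is the initial step of passing from the unstratified PLPO $>_{\rrpo}$ to a concrete stratification $>_{\rrpo}^{\ell}$ that uniformly witnesses the rules of $\mathcal R$; once this finite $\ell$ is secured, the remainder is bookkeeping that combines Theorem \ref{t:embed} with closure of the primitive recursive functions under bounded iteration.
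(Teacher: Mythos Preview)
Your proposal is correct and follows essentially the same three-step route as the paper: pass from $>_{\rrpo}$ to some $>_{\rrpo}^{\ell}$, invoke Theorem~\ref{t:embed} to bound any derivation length by $\mathcal I(s_0)$, and then bound $\mathcal I$ primitive-recursively in the term size. The only notable difference is in step one: the paper picks $\ell \geq \max\{|r| \mid l \to r \in \mathcal R\}$ explicitly (this works because the recursion depth of $\ssup$ on $s \ssup t$ is governed by $|t|$), whereas you argue abstractly that a finite $\ell$ exists since each of the finitely many derivations $l >_{\rrpo} r$ has bounded nesting; conversely, you spell out the bound $\mathcal I(t)\le \Phi^{|t|}(1)$ where the paper simply says ``it is not difficult to observe.''
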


\begin{proof}
Given a rewrite system $\mathcal R$, suppose that
$\mathcal R \subseteq >_{\rrpo}$ holds for some PLPO $>_{\rrpo}$.
Let $\ell \geq \max \{ |r| \mid \exists l (l \rightarrow r \in \mathcal R) \}$.
Then it can be seen that even
$\mathcal R \subseteq >_{\rrpo}^{\ell}$ holds.
Choose a constant $d$ so that
$\max \big( \{ \ar (f) +1 \mid f \in \mathcal F \} \cup
            \{ \ell \cdot (K+2) +2 \}
      \big) \leq d$.
Then, for any rewriting rule $l \rightarrow r \in \mathcal R$,
$|r| +1 \leq \ell \cdot (K+2) +2 \leq d$.
Hence by Theorem \ref{t:embed}, for any ground term $t$,
the maximal length of rewriting sequences starting with $t$ is bounded by 
$\mathcal I (t)$ for the interpretation $\mathcal I$ induced by 
$\ell$ and $d$.
It is not difficult to observe that $\mathcal I (t)$ is bounded by 
$F(|t|)$ for a primitive recursive function $F$.
This observation allows us to conclude.
\qed
\end{proof}

\section{Application to non-trivial closure conditions for primitive
 recursive functions}
\label{s:application}

In this section we show that Corollary \ref{c:bound} can be used to show
that the class of primitive recursive functions is closed under primitive
recursion with parameter substitution (\textbf{PRP}), unnested multiple
recursion (\textbf{UMR}) and simple nested recursion (\textbf{SNR}).

\begin{lemma}
\label{l:prec}
For any primitive recursive function $f$ there exists a rewrite system
 $\mathcal R$ defining $f$ such that $\mathcal R \subseteq >_{\rrpo}$
 for some PLPO $>_{\rrpo}$, where the argument positions of every
 function symbol are safe ones only.
\end{lemma}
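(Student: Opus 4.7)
The plan is to argue by induction on the recursion-theoretic construction of $f$ as a primitive recursive function, i.e.\ on how $f$ is built from the initial functions (zero, successor, projections) by composition and primitive recursion. Throughout the induction I keep two invariants: (a)~every function symbol has all its argument positions declared safe, i.e.\ $k = 0$ in the notation $f(s_1, \dots, s_k; s_{k+1}, \dots, s_{k+l})$, and (b)~$\mathcal D_{\lex} = \emptyset$. Consequently, only Cases \ref{d:rrpo:1}--\ref{d:rrpo:4} of Definition \ref{d:rrpo} can ever be invoked, and Case \ref{d:rrpo:4} collapses to a pointwise comparison on the safe tuples since the normal tuples are empty.

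For the base cases I introduce $0$ and $\ms$ as constructors with no rewrite rules, and for each projection $P^k_i$ a defined symbol $\mathsf{P}^k_i$ with the single rule $\mathsf{P}^k_i(; x_1, \dots, x_k) \to x_i$; these are immediately oriented by Case~\ref{d:rrpo:2}. In the composition step, given $f(\vec x) = h(g_1(\vec x), \dots, g_m(\vec x))$ together with rewrite systems $\mathcal R_h, \mathcal R_{g_1}, \dots, \mathcal R_{g_m}$ supplied by the induction hypothesis, I take their (disjoint) union, introduce a fresh defined symbol $\mf$, and extend the precedence by placing $\mf$ strictly above every symbol already present. The new rule $\mf(; \vec x) \to \mh(; \mg_1(; \vec x), \dots, \mg_m(; \vec x))$ is then oriented by a cascade of applications of Case~\ref{d:rrpo:3}: first $\mf(; \vec x) >_{\rrpo} x_j$ by Case~\ref{d:rrpo:2}, then $\mf(; \vec x) >_{\rrpo} \mg_i(; \vec x)$ by Case~\ref{d:rrpo:3} (the normal-tuple precondition of that case being vacuous), and finally $\mf(; \vec x) >_{\rrpo} \mh(; \mg_1(; \vec x), \dots, \mg_m(; \vec x))$ by one more application.

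The primitive recursion step follows Example~\ref{e:1}: given $f(0, \vec y) = g(\vec y)$ and $f(x{+}1, \vec y) = h(x, \vec y, f(x, \vec y))$, I add $\mf$ as a fresh defined symbol in $\mathcal D \setminus \mathcal D_{\lex}$ on top of the combined signature. The base rule $\mf(; 0, \vec y) \to \mg(; \vec y)$ is handled by Case~\ref{d:rrpo:3}, and for the step rule I first derive $\mf(; \ms(; x), \vec y) >_{\rrpo} \mf(; x, \vec y)$ via Case~\ref{d:rrpo:4} (using $\ms(; x) >_{\rrpo} x$, which follows from Case~\ref{d:rrpo:1} since $\ms$ is a constructor), and then wrap the inner call with $\mh$ by Case~\ref{d:rrpo:3}.

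The main issue to verify is that the rules already present in the combined rewrite system remain compatible with the extended PLPO after placing $\mf$ on top of the precedence. This reduces to the standard observation that $>_{\rrpo}$ is monotone in its underlying precedence: since the fresh symbol $\mf$ does not occur in any previously oriented rule and no existing precedence relation is weakened, each old orientation transfers verbatim by the same case of Definition~\ref{d:rrpo} as before. Granting this closure property, the induction goes through and produces the required rewrite system $\mathcal R$ with $\mathcal R \subseteq {>_{\rrpo}}$ in which every argument position is safe.
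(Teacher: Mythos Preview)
Your proposal is correct and follows essentially the same approach as the paper: induction on the primitive recursive build-up, all argument positions safe, $\mathcal D_{\lex} = \emptyset$, composition handled by two nested uses of Case~\ref{d:rrpo:3}, and the recursion step handled exactly as in Example~\ref{e:1} via Case~\ref{d:rrpo:4} followed by Case~\ref{d:rrpo:3}. The only cosmetic differences are that the paper treats the $k$-ary zero functions as explicit initial defined symbols (oriented by Case~\ref{d:rrpo:3}) rather than relying on the nullary constructor~$0$, and that you make the precedence-monotonicity closure property explicit where the paper leaves it implicit.
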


\begin{proof}
By induction along the primitive recursive definition of $f$.
We always assume that set $\mathcal C$ of constructors consists only of
 a constant $0$ and a unary constructor $\ms$, where $0$ is 
interpreted as the least natural $0$ and $\ms$ as the numerical successor function.

{\sc Case.} $f$ is one of the initial functions:
First consider the subcase that $f$ is the $k$-ary constant function
$(x_1, \dots, x_k) \mapsto 0$.
In this subcase $f$ is defined by a single rule
$\mO^k (x_1, \dots, x_k) \rightarrow 0$.
Defining a precedence $\geqslant_{\mathcal F}$ by
$\mO^k \approx \mO^k$ and
$\mO^k >_{\mathcal F} 0$,
we can see that for the PLPO induced by $\geqslant_{\mathcal F}$,
$\mO^k (; x_1, \dots, x_k) >_{\rrpo} 0$
holds by Case \ref{d:rrpo:3} of Definition \ref{d:rrpo}.
Consider the subcase that $f$ is a $k$-ary projection function
$(x_1, \dots, x_k) \mapsto x_j$
($j \in \{ 1, \dots, k \}$).
In this subcase $f$ is defined by a single rule
$\mI^k_j (x_1, \dots, x_k) \rightarrow x_j$.
An application of Case \ref{d:rrpo:2} of Definition \ref{d:rrpo}
allow us to see that
$\mI^k_j (; x_1, \dots, x_k) >_{\rrpo} x_j$
holds.

{\sc Case.} $f$ is defined from primitive recursive functions 
$\vec g = g_1, \dots, g_l$
 and $h$ by composition as 
$f(x_1, \dots, x_k) = h(g_1 (\vec x), \dots, g_l (\vec x))$:
By IH each of the functions $\vec g$ and $h$ can be defined by a rewrite
 system which can be oriented with a PLPO.
Define a rewrite system $\mathcal R$ by
expanding those rewrite systems obtained from IH with such a new rule as
$\mf (x_1, \dots, x_k) \rightarrow 
 \mh (\mg_1 (\vec x), \dots, \mg_l (\vec x))$.
Then the function $f$ is defined by the rewrite system $\mathcal R$.
Extend the precedences obtained from IH so that
$\mf \approx_{\mathcal F} \mf$ and
$\mf >_{\mathcal F} \mg$
for all $\mg \in \{ \mg_1, \dots, \mg_l, \mh \}$.
Then, for the induced PLPO $>_{\rrpo}$, we have that
$\mf (; \vec x) >_{\rrpo} \mg_j (; \vec x)$
for all $j \in \{ 1, \dots, l \}$
by an application of Case \ref{d:rrpo:3} of Definition  \ref{d:rrpo}.
Hence another application of Case \ref{d:rrpo:3} allows us to conclude
$\mf (; x_1, \dots, x_k) >_{\rrpo}
 \mh (; \mg_1 (; \vec x), \dots, \mg_l (; \vec x))$.

{\sc Case.}
$f$ is defined by primitive recursion: 
In this case it can be seen that the assertion holds as in Example \ref{e:1}.
\qed
\end{proof}

\begin{theorem}
\label{t:snr}
If a function $f$ is defined from primitive recursive functions by an
instance of $\mathbf{(PRP)}$, $(\mathbf{UMR})$ or $(\mathbf{SNR})$,
then there exists a rewrite system
 $\mathcal R$ defining $f$ such that $\mathcal R \subseteq >_{\rrpo}$
 for some PLPO $>_{\rrpo}$.
\end{theorem}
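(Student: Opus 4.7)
The plan is to combine Lemma \ref{l:prec}, which supplies PLPO-compatible rewrite systems defining the auxiliary primitive recursive functions, with the PLPO orientations of the schema-defining rules already exhibited in Examples \ref{e:2}, \ref{e:3}, and \ref{e:4}. The only mismatch between these two sources is that Lemma \ref{l:prec} declares every argument of every auxiliary symbol \emph{safe}, whereas the Examples require some of these arguments to be \emph{normal}. I would bridge this gap by introducing a fresh layer of wrapper symbols, one per auxiliary function, with the argument separation dictated by the corresponding Example.

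Focusing on $(\mathbf{PRP})$ with auxiliary primitive recursive $g, h, p$: by Lemma \ref{l:prec}, fix pairwise signature-disjoint PLPO-oriented rewrite systems $\mathcal R_g, \mathcal R_h, \mathcal R_p$ defining them via symbols $\mg, \mh, \mP$ with only safe positions. Introduce fresh wrapper symbols $\overline{\mg}, \overline{\mh}, \overline{\mP}$ with the separation used in Example \ref{e:2}, add the bridging rules $\overline{\mg}(; y) \rightarrow \mg(; y)$, $\overline{\mh}(x; y, z) \rightarrow \mh(; x, y, z)$, and $\overline{\mP}(x; y) \rightarrow \mP(; x, y)$, and then add the two schema rules of Example \ref{e:2} but with the wrapper symbols replacing the original ones:
\[
\mf(0; y) \rightarrow \overline{\mg}(; y), \qquad
\mf(\ms(;x); y) \rightarrow \overline{\mh}(x; y, \mf(x; \overline{\mP}(x; y))).
\]
Set $\mathcal D_{\lex} := \{\mf\}$ and extend the union of the three precedences supplied by Lemma \ref{l:prec} with the edges $\overline{\mg} >_{\mathcal F} \mg$, $\overline{\mh} >_{\mathcal F} \mh$, $\overline{\mP} >_{\mathcal F} \mP$, and $\mf >_{\mathcal F} \overline{\mg}, \overline{\mh}, \overline{\mP}$.

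Each bridging rule is then oriented by Case \ref{d:rrpo:3} of Definition \ref{d:rrpo}, where the premises against the safe right-hand-side arguments (variables already occurring on the left) are discharged by Case \ref{d:rrpo:2}; the two schema rules are oriented verbatim as in Example \ref{e:2}. The orientations supplied by $\mathcal R_g, \mathcal R_h, \mathcal R_p$ are preserved under the precedence extension because the new edges only add fresh top symbols above previously unrelated ones. The cases $(\mathbf{UMR})$ and $(\mathbf{SNR})$ proceed analogously, with the wrapper argument separations and the orientations of the schema rules mimicking Examples \ref{e:3} and \ref{e:4} respectively (and likewise introducing wrappers for the extra auxiliary $p'$ in the $(\mathbf{UMR})$ case). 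The main obstacle is purely bookkeeping: one must verify that the extended precedence remains a well-founded quasi-order on the unified signature and that the argument separations chosen for the wrapper symbols are consistent with what each Example requires; once these are set up correctly, no further calculation beyond what is already in the Examples is needed.
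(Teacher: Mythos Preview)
Your proposal is correct and follows essentially the same approach as the paper's own proof: invoke Lemma \ref{l:prec} for the auxiliary functions, introduce fresh wrapper symbols to reconcile the all-safe argument separations produced by that lemma with the normal/safe separations demanded by the Examples, orient the bridging rules via Case \ref{d:rrpo:3}, and reuse the Example orientations for the schema rules. The paper exemplifies with $(\mathbf{SNR})$ rather than $(\mathbf{PRP})$ and omits a wrapper for $\mg$ (whose separation already matches), but these are cosmetic differences only.
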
 

\begin{proof}
To exemplify the theorem, suppose that $f$ is defined from primitive
 recursive functions $g$, $p$ and $h$ by (\textbf{SNR}) as 
$f(0, y) = g(y)$ and
$f(x+1, y) = h(x, y, f(x, p(x, y, f(x, y))))$.
Let $\mg$, $\mP$ and $\mh$ denote function symbols corresponding
 respectively to the functions $g$, $p$ and $h$.
From Lemma \ref{l:prec} one can find rewrite systems defining $g$, $p$
 and $h$ all of which can be oriented with some PLPO with the argument
 separation $\ms (; x)$, $\mg(; y)$, $\mP (; x, y, z)$ and $\mh (; x, y, z)$.
We expand the signatures for $g$, $p$ and $h$ with new function symbols
$\mP'$, $\mh'$ and $\mf$.
Define a rewrite system $\mathcal R$ by expanding those rewrite systems
 for $g$, $p$ and $h$ with the following four rules.
\begin{equation*}
 \begin{array}{rrclcrrcl}
 (\mathrm i) & \mP'(x; y, z) &\rightarrow& \mP (; x, y, z) &\mbox{\quad}&
 (\mathrm{iii}) & \mf (0; y) &\rightarrow& \mg (; y) \\
 (\mathrm{ii}) & \mh'(x; y, z) &\rightarrow& \mh (; x, y, z) & \mbox{\quad} &
 (\mathrm{iv}) & \mf (\ms (; x); y) &\rightarrow& 
 \mh' (x; y, \mf (x; \mP' (x; y, \mf(x; y))))
 \end{array}
\end{equation*} 
Clearly, $f$ is defined by the rewrite system $\mathcal R$. 
Expand the precedences for $g$, $p$ and $h$ so that
$\mQ' >_{\mathcal F} \mQ$ for each $\mQ \in \{ \mP, \mh \}$,
$\mf \approx_{\mathcal F} \mf$ and
$\mf >_{\mathcal F} \mQ$ for all $\mQ \in \{ \mg, \mP', \mh' \}$.
Finally, we expand the separation of argument positions as
indicated in the new rules (i)--(iv).
Then, as observed in Example \ref{e:4}, the rule (iii) and (iv) can be
 oriented with the induced PLPO $>_{\rrpo}$.
For the rule (i), since $\mP' >_{\mathcal F} \mP$ and
$\mP' (x; y, z) >_{\rrpo} u$ for all $u \in \{ x, y, z \}$,
$\mP' (x; y, z) >_{\rrpo} \mP (; x, y, z)$ holds as an instance of Case
\ref{d:rrpo:3} of Definition \ref{d:rrpo}.
The rule (ii) can be oriented in the same way.
\qed
\end{proof}

\begin{corollary}
\label{c:main}
The class of primitive recursive functions is closed under primitive
 recursion with parameter substitution, unnested multiple recursion and simple nested recursion.
\end{corollary}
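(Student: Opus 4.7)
The plan is to chain the two previous results, Theorem \ref{t:snr} and Corollary \ref{c:bound}, and then invoke the standard Cichon-Weiermann-style correspondence between derivation-length bounds and the complexity of the defined function that is recalled in the introduction. Given a function $f$ defined from primitive recursive functions by an instance of $(\mathbf{PRP})$, $(\mathbf{UMR})$ or $(\mathbf{SNR})$, I would first apply Theorem \ref{t:snr} to produce a rewrite system $\mathcal R$ that defines $f$ and is compatible with some PLPO $>_{\rrpo}$. Corollary \ref{c:bound} then yields a primitive recursive function $B$ such that every $\mathcal R$-rewriting sequence starting at a ground term $t$ has length at most $B(|t|)$.

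The remaining step is to deduce that $f$ itself is primitive recursive. To evaluate $f(n_1, \ldots, n_k)$ one forms the ground term $t_{\vec n} = \mf(\overline{n_1}, \ldots, \overline{n_k})$, where $\overline{n}$ denotes the numeral built from $0$ and $\ms$; its size is linear, and in particular primitive recursive, in $\vec n$. Rewriting $t_{\vec n}$ to its normal form takes at most $B(|t_{\vec n}|)$ steps and produces a numeral encoding $f(\vec n)$. Fixing a primitive recursive encoding of ground terms as natural numbers, a single rewrite step against the \emph{finite} rule set $\mathcal R$ can be performed by a primitive recursive function (matching followed by substitution against finitely many left-hand sides), and iterating this one-step simulation a primitive recursive number of times is again primitive recursive. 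Hence $f$ lies in the primitive recursive class.

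The main obstacle is really only the last step: the primitive recursive simulation of rewriting is standard bookkeeping but must be set up carefully enough that both the term encoding and the one-step reduction function are manifestly primitive recursive. Once that is in place, the three advertised closure conditions fall out uniformly from the termination-order analysis developed in Sections \ref{s:PLPO} and \ref{s:int}, without any need for Péter-style ad hoc reductions to standard primitive recursion.
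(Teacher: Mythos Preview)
Your proposal is correct and matches the paper's own proof: both apply Theorem~\ref{t:snr} to obtain a PLPO-compatible rewrite system, invoke Corollary~\ref{c:bound} for a primitive recursive derivation-length bound, and then appeal to the Cichon--Weiermann correspondence to conclude that the defined function is primitive recursive. The only cosmetic difference is that the paper phrases the last step via the specific elementary-recursive step-simulation functions $G$, $H_0$, $H_1$ from \cite{cich_weier}, while you describe the same bounded iteration argument directly.
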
 

\begin{proof}
Suppose that a function $f: \mathbb N^k \rightarrow \mathbb N$ is
 defined by an instance of either primitive recursion with parameter
 substitution, unnested multiple recursion, or simple nested recursion.
Then by Theorem \ref{t:snr} we can find a rewrite
 system $\mathcal R$ defining $f$ such that 
$\mathcal R \subseteq >_{\rrpo}$ for some PLPO $>_{\rrpo}$.
We can assume that the underlying set $\mathcal C$ of constructors
 consists only of $0$ and $\ms$.
For each natural $m$ let 
$\underline{m} \in \mathcal{T(C)}$ 
denote a ground term defined by
$\underline{0} = 0$ and
$\underline{m+1} = \ms (\underline{m})$.
Then by definition $|\underline{m}| = m$ for any $m$.
Suppose that a function symbol $\mathsf f$ can be interpreted as the function $f$. 
Then by Corollary \ref{c:bound} there exists a primitive recursive function
 $F$ such that the maximal length of rewriting sequences in 
$\mathcal R$ 
starting with $\mathsf f (\underline{m_1}, \dots, \underline{m_k})$
is bounded by $F \left( \sum_{j=1}^k m_j \right)$.
As observed in \cite{cich_weier}, one can find elementary recursive
 functions $G$, $H_0$ and $H_1$ such that
$G(0, \vec x) = H_0 (\vec x)$,
$G(y+1, \vec x) = H_1 (y, \vec x, G(y, \vec x))$,
and $f$ is elementary recursive in the function
$\vec x \mapsto 
 G \left (F \left( \sum_{j=1}^k x_j \right), \vec x \right)$,
where $\vec x = x_1, \dots, x_k$.
Essentially, the stepping function $H_1$ simulates one step of rewriting
 in $\mathcal R$ (assuming a suitable rewriting strategy).
Obviously, the function 
$\vec x \mapsto 
 G \left( F \left( \sum_{j=1}^k x_j \right), \vec x \right)$
is primitive recursive, and thus so is
 $f$.
\qed
\end{proof}

\section{Comparison to related path orders and limitations of
predicative lexicographic path orders}
\label{s:comparison}

The definition of PLPO is strongly motivated by path orders based
on the normal/safe argument separation \cite{AM08,AEM11,AEM12}.
On one hand, due to allowance of multiset comparison in the polynomial path order POP*  \cite{AM08}, PLPO and POP* are
incomparable. 
On the other hand, the PLPO is an extension of the exponential path
order EPO* \cite{AEM11} though the EPO* only induces the exponential
(innermost) runtime complexity.
By induction according to the inductive definition of EPO* 
$>_{\mathsf{epo*}}$, it can be shown that
$>_{\mathsf{epo*}} \subseteq >_{\rrpo}$ holds with the same precedence
and the same argument separation. 
In general, none of (\textbf{PRP}), (\textbf{UMR}) and (\textbf{SNR}) can
be oriented with POP*s or EPO*s.

Readers also might be interested in comparison to the {\em ramified
lexicographic path order} RLPO \cite{Cichon92}, which
covers (\textbf{PRP}) and (\textbf{UMR}) but cannot handle
(\textbf{SNR}).
The contrast to POP*, EPO* and RLPO can be found in
Fig. \ref{fig:comparison}, where ``runtime complexity'' means
innermost one.
Very recently, the {\em generalised ramified lexicographic path order}
GRLPO, which is an extension of the RLPO, has been considered by
A. Weiermann in a manuscript \cite{weier_GRLPO}.
In contrast to the RLPO, the GRLPO can even handle (\textbf{SNR}).
The GRLPO only induces the primitive recursive runtime complexity
like the PLPO, but seems incomparable with the PLPO.

We mention that the PLPO can also handle a slight extension of simple nested recursion with more
than one recursion parameters, known as {\em general} simple nested recursion
\cite[page 221]{cich_weier}.
For example, consider the following two equations of general simple nested recursion:
\begin{eqnarray}
f(x+1, y+1; z) &=& 
h(x, y; z, f(x, p(x, y;); f(x+1, y; z)))
\label{gsnr1} \\
f(x+1, y+1; z) &=& 
h(x, y; z, f(x, p(x, y; z); f(x+1, y; z)))
\label{gsnr2}
\end{eqnarray}
The equation (\ref{gsnr1}) can be oriented with a PLPO, but the equation (\ref{gsnr2}) cannot be oriented with any PLPO
due to an additional occurrence of $z$ in a safe argument position of $p(x, y; z)$.

\begin{figure}[t]
\centering
\begin{tabular}{p{2cm}|c|c|c|c|c}
\hline 
& \ POP* \ & \ EPO* \ & \quad RLPO \quad & PLPO 
& \ LPO \ \\
\hline \hline
runtime complexity &
\ polynomial \ & \ exponential \ & 
\multicolumn{2}{c|}{ \ primitive recursive \ }
& \ multiply recursive \\
\hline
(\textbf{PRP}) & ---  & --- & \ding{`3} & \ding{`3} & \ding{`3} \\
(\textbf{UMR}) & ---  & --- & \ding{`3} & \ding{`3} & \ding{`3} \\
(\textbf{SNR}) & ---  & --- & --- & \ding{`3} & \ding{`3} \\
\hline
\end{tabular} 
\\
(---: not orientable, \ding{`3}: orientable)
\caption{Comparison to related path orders}
\label{fig:comparison}
\end{figure}

\section{Concluding remarks}
\label{s:conclusion}

We introduced a novel termination order, the predicative lexicographic path order PLPO.
As well as the lexicographic path order LPO, any instance of primitive
recursion with parameter substitution (\textbf{PRP}), unnested multiple
recursion (\textbf{UMR}) and simple nested recursion (\textbf{SNR})
 can be oriented with a PLPO.
On the other side, in contrast to the LPO,
the PLPO only induces primitive recursive upper bounds on derivation
lengths of compatible rewrite systems.
Relying on the connection between time complexity of functions and
runtime complexity of rewrite systems, this yields a uniform proof
of the classical fact that the class of primitive recursive functions is
closed under (\textbf{PRP}), (\textbf{UMR}) and (\textbf{SNR}).

It can be seen that the primitive recursive interpretation presented in Section 
\ref{s:int} is not affected by allowing permutation of safe argument
positions in Case \ref{d:rrpo:4} of Definition \ref{d:rrpo}.
Namely, Case \ref{d:rrpo:4} can be replaced with a slightly stronger condition:
{\it 
      $f \in \mathcal D \setminus \mathcal D_{\lex}$,
      $t = g(t_1, \dots, t_k; t_{k+1}, \dots, t_{k+l})$ for some
      $g$ such that $f \approx_{\mathcal F} g$, 
  \begin{itemize}
  \item $(s_{1}, \dots, s_{k}) \geqslant_{\rrpo}
         (t_{1}, \dots, t_{k})$,
        and   
  \item $(s_{k+1}, \dots, s_{k+l}) >_{\rrpo}
         (t_{\pi (k+1)}, \dots, t_{\pi (k+l)})$
        for some permutation 
        $\pi: \{ k+1, \dots, k+l \} \rightarrow
              \{ k+1, \dots, k+l \}$.
  \end{itemize}
\normalfont
Allowance of permutation of normal argument positions is not clear.
One would recall that, as shown by D. Hofbauer in \cite{hof}, the 
{\em multiset path order} only induces primitive recursive upper bounds on derivation lengths
of compatible rewrite systems.
Allowance of multiset comparison is not clear even for safe arguments.
}



\end{document}